\newcommand{\tsk}[1]{\textcolor{YellowOrange}}
\def\@endtheorem{\endtrivlist}
\newtheorem{teo}{Theorem}[section]
\newtheorem{defin}[teo]{Definition}
\newtheorem{prop}[teo]{Proposition}
\newtheorem{cor}[teo]{Corollary}
\newtheorem{lemma}[teo]{Lemma}
\theoremstyle{definition}
\newtheorem{remark}[teo]{Remark}
\newtheoremstyle{dico}
 {\baselineskip}   
  {\topsep}   
  {}  
  {0pt}       
  {} 
  {.}         
  {5pt plus 1pt minus 1pt} 
  {}          
\theoremstyle{dico}
\numberwithin{equation}{section}
\newcommand{\ra}{\rightarrow}
\newcommand{\C}{\mathbb{C}}
\newcommand{\vacuo}{\emptyset}
\newcommand{\Ad}{\operatorname{Ad}}
\newcommand{\ad}{{\operatorname{ad}}}
\renewcommand{\phi}{\varphi}
\newcommand{\sx}{\langle}
\newcommand{\xs}{\rangle}
\newcommand{\Ga}{\Gamma}
\renewcommand{\sp}{\mathfrak{sp}}
\renewcommand{\phi}             {\varphi}
\newcommand{\sieg}{\mathfrak{S}}
\newcommand{\M}{\mathsf{M}}
\newcommand{\T}{\mathsf{T}}
\newcommand{\A}{\mathsf{A}}
\newcommand{\Z}{\mathsf{Z}}
\newcommand{\Ag}{\mathsf{A}_g}
\newcommand{\Sp}                {\operatorname {Sp}}
\newcommand{\tr}              {\operatorname {tr}}
\newcommand{\diag}{\operatorname{diag}}
\newcommand{\mihi}[1]{}
\def\LaTeX{%
	\let\Begin\begin
	\let\salta\relax
	\let\finqui\relax
	\let\futuro\relax}
\def\Edef{\end{definition}}
\def\Blem{\Begin{lemma}}
\def\Elem{\end{lemma}}
\def\Bdim{\Begin{proof}}
\def\Edim{\end{proof}}
\def\Eprop{\end{prop}}
\def\Ecor{\end{corollary}}
\def\Ethm{\end{theorem}}
\def\Erem{\end{remark}}
\def\p{\mathfrak{p}}
\def\k{\mathfrak{k}}
\def\u{\mathfrak{u}}
\def\g{\mathfrak{g}}
\def\Z{\mathfrak{Z}}
\def\erre{\mathbb{R}}
\begin{document}

 \author{Carolina Tamborini}
 \title
[Symmetric spaces uniformizing Shimura varieties]{Symmetric spaces uniformizing Shimura varieties in the Torelli locus}

\address{Universit\`{a} di Pavia}
 \email{c.tamborini2@campus.unimib.it}
 \subjclass[2010]{14G35, 14H15, 14H40]}

\thanks{The author was partially supported by MIUR PRIN 2017
  ``Moduli spaces and Lie Theory'',  by MIUR, Programma Dipartimenti di Eccellenza
   (2018-2022) - Dipartimento di Matematica ``F. Casorati'',
   Universit\`a degli Studi di Pavia and by INdAM (GNSAGA)}

\maketitle

\begin{abstract}
	An algebraic subvariety $Z$ of $\A_g$ is  \emph{totally geodesic} if it is the image via the natural projection map of some totally geodesic submanifold $X$ of the Siegel space. We say that $X$ is the symmetric space \emph{uniformizing} $Z$.
	In this paper we determine which symmetric space uniformizes each of the low genus counterexamples to the Coleman-Oort conjecture obtained studying Galois covers of curves. It is known that the counterexamples obtained via Galois covers of elliptic curves admit
	two fibrations in totally geodesic subvarieties. The second result of the paper studies the relationship between these fibrations and the uniformizing symmetric space of the examples.
\end{abstract}

\setcounter{tocdepth}{1}

\tableofcontents{}

\section{Introduction}

\subsection {} Let $\M_g$ denote the moduli space of smooth complex algebraic curves of genus $g$, $\Ag$ the moduli space
of principally polarized abelian varieties of dimension $g$ over $\mathbb{C}$ and $j : \M_g \ra \A_g$ the Torelli map. We call $\T_g=\overline{j(\M_g)}$ the \emph{Torelli locus.} The moduli space $\A_g=\sieg_g/Sp(2g, \mathbb{Z})$ is a quotient of the Siegel space $\sieg_g$, which is an irreducible hemitian symmetric space of the non-compact type. In particular, $\sieg_g$ induces on $\A_g$ a locally symmetric metric, called the \emph{Siegel metric.} Denote by $\pi: \sieg_g \rightarrow \A_g$ the natural projection map.

\begin{defin}
	 An algebraic subvariety $Z$ of $\A_g$ is  \emph{totally geodesic} if $Z=\pi(X)$ for some totally geodesic submanifold $X\subset \sieg_g$. 
	We say that $X$ is the symmetric space \textbf{{uniformizing}} $Z$.	
\end{defin}
In \cite{cfg,dejong-zhang,fgp,fpp,fpi,
	fp,fgs,gm1,gpt,hain,liu-yau-ecc,lz,moonen-special, moonen-oort} totally geodesic subvarieties of $\A_g$ have been studied in relation with the Coleman-Oort conjecture, which for large genus precludes the existence of Shimura subvarieties $Z$ of $\A_g$ generically contained in $j(\M_g)$, i.e such that $Z\subset \T_g$ and $Z\cap j(\M_g)\neq \vacuo$. (See \cite{coleman,oort-can} and \cite{moonen-oort} for a thorough survey). In our context, we will make use of the following characterization of \emph{Shimura} subvarieties of $\A_g$ due to Mumford and Moonen (see \cite{mumford-Shimura,moonen-linearity-1}): an algebraic subvariety of
$\A_g$ is Shimura if and only
if it is totally geodesic and contains a CM point.

\subsection{} In this paper we study the low genus  counterexamples to the Coleman-Oort conjecture. These are all in genus $g\leq 7$ and are obtained studying families of Galois covers of curves. The idea of the construction is the following. Take the family of all Galois covers  $C\rightarrow C'= C/G$, with fixed genera $g(C)=g$, $g(C')=g'$,  number of ramification points, and monodromy. Let $Z$ denote the closure in $\A_g$ of the locus described by $[JC]$ for $C$ varying in the family. The group $G$ acts holomorphically on every curve $C$ of the family, so it maps injectively into $Sp(2g, \erre)$. Denote by $\Ga$ the image of $G$ in $Sp(2g, \erre).$ Then $\pi(\sieg^\Ga)$ is a Shimura subvariety of $\A_g$ and, under the assumption that $\dim (S^2H^0(C, K_C))^G=\dim H^0(C, 2K_C)^G,$ we have $Z=\pi(\sieg^\Ga).$ In particular, it is a Shimura subvariety of $\A_g$ generically contained in $j(\M_g)$.

\subsection{} Our aim is to determine which symmetric space uniformizes each of the counterexamples. 
Since all $1-$dimensional irreducible hermitian symmetric spaces of the non-compact type are isomorphic to the Poincaré disc, we will focus on counterexamples of dimension greater than one. Table 2 in \cite{fgp} lists all counterexamples in genus $\leq 9$ obtained as above. {Denote by $B_n(\mathbb{C})=\{w\in \mathbb{C}^n:\ \sum_{k=1}^{n}w^i \overline{w}^i<1\}$ the open unit ball in $\mathbb{C}^n$, and by $\sieg_n= Sp(2n, \erre)/U(n)$ the Siegel space.} Our main result is the following:

\begin{teo}\label{teorema1}
  The uniformizing symmetric spaces for the examples in \cite{fgp} of dimension
   $\geq 2$ are the following:

{\begin{center}
	\begin{tabular}[H]{ccccc}
		
		& $g$  & G  & $\dim_{\mathbb{C}}$ & $\sieg^{\Ga}$\\
		\midrule
		(2)&  2 & $\mathbb{Z}/2 $ & 3 & $\sieg_2$\\
		(6) & 3 & $\mathbb{Z}/3 $ & 2 & $B_2(\mathbb{C})$ \\
		(8) & 3 & $\mathbb{Z}/4$ & 2&  $B_2(\mathbb{C})$  \\
		(10) & 4 & $\mathbb{Z}/3$ & 3 & $B_3(\mathbb{C})$ \\
		(14) & 4 & $\mathbb{Z}/6$  & 2 & $B_2(\mathbb{C})$ \\
		(16) & 6 & $\mathbb{Z}/5$ & 2 & $B_2(\mathbb{C})$ \\
		(26) & 2 & $\mathbb{Z}/2\times \mathbb{Z}/2$ & 2 &  $B_1(\mathbb{C})\times B_1(\mathbb{C})$ \\
		(27) & 3 & $\mathbb{Z}/2\times \mathbb{Z}/2$ & 3 & $B_1(\mathbb{C})\times B_1(\mathbb{C})\times B_1(\mathbb{C})$  \\
		(31) & 3& $S_3$ & 2 & $B_1(\mathbb{C})\times B_1(\mathbb{C})$ \\
		(32) & 3 & $D_4$  & 2 &  $B_1(\mathbb{C})\times B_1(\mathbb{C})$ \\
		\bottomrule
	\end{tabular}
\end{center}}
\end{teo}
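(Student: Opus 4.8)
As recalled in the introduction, for each of these families one has $Z=\pi(\siegg^\Ga)$, where $\Ga\subseteq\Sp(2g,\erre)$ is the image of $G$; hence the symmetric space uniformizing $Z$ is the fixed locus $\siegg^\Ga\subseteq\siegg$, and the task is to identify it row by row. The first step is to reduce to representation theory. Fixing a curve $C$ in the family and writing $V_\erre=H_1(C,\erre)$ with its intersection form $\om$, the Siegel space $\siegg$ is the space of $\om$-compatible positive complex structures $J$ on $V_\erre$; the action of $G$ on $C$ induces a symplectic representation $\rho\colon G\to\Sp(V_\erre,\om)$ with image $\Ga$, and $\siegg^\Ga=\{J:\ J\rho(g)=\rho(g)J\ \text{for all }g\in G\}$ is the symmetric space attached to the centralizer of $\Ga$ in $\Sp(V_\erre,\om)$. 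In particular $\siegg^\Ga$ depends only on the symplectic $G$-module $V_\erre$, equivalently on the complex $G$-module $H^0(C,K_C)$ together with its Hodge conjugate.

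The second step is the structure theorem for such fixed loci (see \cite{fpp} and the references therein): decomposing $V_\erre$ into its $G$-isotypic components, these are mutually $\om$-orthogonal (every real irreducible of a finite group being self-dual) and are preserved by every commuting $J$, so $\siegg^\Ga$ is a product of irreducible classical Hermitian domains, one per isotypic component, whose isomorphism type is determined by (i) the Schur type ($\erre$, $\C$, or $\mathbb H$) of the underlying real irreducible representation $W$; (ii) the parity — symmetric or alternating — of the $G$-invariant bilinear form on $W$; and (iii) the pair $(p,q)$, where $p$ is the multiplicity of $W$ in $H^0(C,K_C)$ and $q$ that of $W$ in $H^1(C,\mathcal O_C)$, equivalently (by unitarity of the Hodge metric) the multiplicity of the contragredient $W^{\vee}$ in $H^0(C,K_C)$. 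For $G=\mathbb Z/n$ everything is read off from the eigenspace decomposition $V_\C=\bigoplus_{\zeta^n=1}V_\zeta$: putting $\zeta=e^{2\pi i/n}$ and $d_j=\dim H^0(C,K_C)_{\zeta^j}$,
\[
\siegg^\Ga\ \cong\ \sieg_{d_0}\ \times\ \prod_{0<j<n/2}\mathcal H_{d_j,\,d_{n-j}}\ \times\ \bigl[\,\sieg_{d_{n/2}}\ \text{if }2\mid n\,\bigr],
\]
where $\mathcal H_{p,q}=\U(p,q)/\bigl(\U(p)\times\U(q)\bigr)$ has complex dimension $pq$, so $\mathcal H_{p,1}\cong B_p(\C)$ while $\mathcal H_{p,0}$ is a point. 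For $G=S_3$ and $G=D_4$ one gets an analogous product; since all of their real irreducible representations are of type $\erre$ and carry a \emph{symmetric} invariant form, every factor that occurs is a Siegel space $\sieg_h$ (with $\sieg_1\cong B_1(\C)$).

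The third step is to feed in the $G$-module structure of $H^0(C,K_C)$ for each row of the table. This is computed from the monodromy datum of the family, recorded in Table~2 of \cite{fgp}, via the Chevalley--Weil formula: in the cyclic cases it produces the integers $d_j$ explicitly, and in the $S_3$ and $D_4$ cases the multiplicities of the irreducible constituents. Substituting these numbers into the product of the second step — dropping the factors $\mathcal H_{p,0}$ and $\sieg_0$, which are points, recognising $\mathcal H_{p,1}$ as the ball $B_p(\C)$, and noting $\sieg_1\cong B_1(\C)$ while $\sieg_2$ is the three-dimensional Siegel space of row $(2)$ — one reads off each entry of the last column. The bookkeeping is cross-checked against the known value of $\dim_\C Z=\dim\bigl(S^2H^0(C,K_C)\bigr)^G$, which must equal $\dim_\C\siegg^\Ga=\sum p_\alpha q_\alpha+\sum_\beta h_\beta(h_\beta+1)/2$.

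I expect the only delicate point to be the second step — pinning down the classical type of each factor. Concretely this means writing each isotypic component $V_\alpha$ as $W_\alpha\otimes_{\End_G(W_\alpha)}M_\alpha$ and following how $\om$ decomposes as the invariant form on $W_\alpha$ tensored with a form on the multiplicity space $M_\alpha$, so that the factor is identified with the moduli of compatible complex structures on $M_\alpha$. For cyclic $G$ this is immediate from the eigenspace picture, but for $S_3$ and $D_4$ one must check directly that the invariant form on each relevant irreducible is symmetric — whence $M_\alpha$ inherits a symplectic form and the factor is a Siegel space rather than an orthogonal- or unitary-type domain. Once this is settled, all that remains is the Chevalley--Weil computation in each of the ten cases.
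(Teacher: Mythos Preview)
Your approach is correct and genuinely different from the paper's. The paper proceeds by explicit, case-by-case Lie-algebra computations: for each family it writes down the matrix $A_0$ (or generators $A_0,B_0$) acting on $H^0(C,K_C)$, computes $\p'=\Z_\p(\Gamma)$ and then $\k'=[\p',\p']$ directly as spaces of matrices (Corollary~\ref{decomposizione}), checks irreducibility of $\ad\colon\k'\to\mathfrak{gl}(\p')$ by hand, and matches the resulting pair $(\dim\p',\dim\k')$ against the classification table in~\S\ref{tabella}. For the non-cyclic families (26), (31), (32) the paper does \emph{not} work with $G=\Zeta/2\times\Zeta/2$, $S_3$, $D_4$ directly; instead it passes to the isomorphic description as cyclic covers of elliptic curves ((1e), (3e), (4e)) and re-runs the cyclic machinery there.

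Your route---isotypic decomposition of the symplectic $G$-module $H^1(C,\erre)$, followed by the general product formula for $\sieg_g^\Gamma$ with factors determined by Schur type and Hodge multiplicities---is cleaner and handles all ten rows uniformly, including (31) and (32) with their original groups. What you gain is transparency and the avoidance of repetitive matrix bookkeeping; what the paper's approach buys is complete self-containment, since your second step requires either citing or proving the structure theorem (it is standard, but you should be precise about a reference---it is not stated in quite this form in \cite{fpp}). Your identification of the only delicate point is accurate: once you verify that the two-dimensional real irreducibles of $S_3$ and $D_4$ carry a $G$-invariant \emph{symmetric} form (immediate, as both sit inside $O(2)$), the corresponding factors are indeed Siegel spaces $\sieg_h$ on the multiplicity space, and the rest is the Chevalley--Weil bookkeeping you describe.
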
 
\ \\
{Shimura varieties obtained via Galois covers of elliptic curves were constructed in \cite{fpp}. There are 6 such families: (1e), (2e) (3e), (4e), (5e), (6e).
Only families (2e) and (6e) give raise to new Shimura varieties. The other four yield Shimura varieties which can be obtained also using coverings of $\mathbb{P}^1$.
\begin{teo}
	The uniformizing symmetric spaces for the examples in \cite{fpp} of dimension  $\geq 2$ are the following:
	\begin{center}
		\begin{tabular}[H]{ccccc}
			
			& $g$ & G  & $\dim_{\mathbb{C}}$ & $\sieg^\Ga$\\
			\midrule
			(2e)&  3 & $\mathbb{Z}/2 $ & 4 & $B_1(\mathbb{C})\times \sieg_3$  \\
			(6e) & 4 & $\mathbb{Z}/3 $ & 3 & $B_1(\mathbb{C})\times B_2(\mathbb{C})$  \\
			
			\bottomrule
		\end{tabular}
	\end{center}
\end{teo}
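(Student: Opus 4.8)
The plan is to compute, for each of the two families listed, the image $\Ga$ of the Galois group $G$ inside $\Sp(2g,\R)$ acting on $H^1(C,\R)$, and then to identify the connected totally geodesic submanifold $\sieg^\Ga\subset\sieg_g$ fixed pointwise by $\Ga$. By the construction recalled in §1.2, $\pi(\sieg^\Ga)$ is the Shimura subvariety cut out by the family, so it suffices to recognize $\sieg^\Ga$ as an irreducible hermitian symmetric domain (or product of such). The key tool is the standard fact — which I would recall explicitly — that if a finite abelian group $\Ga$ acts on $(V,\om)$ preserving a compatible complex structure, then $V\otimes\C$ decomposes into eigenspaces $V_\chi$ for characters $\chi$ of $\Ga$, the symplectic form pairs $V_\chi$ with $V_{\bar\chi}$, and $\sieg^\Ga$ decomposes accordingly as a product of factors: each pair of conjugate non-real eigenspaces of dimension $a_\chi+b_\chi$ (Hodge numbers in the CM-type sense) contributes a factor $\sieg(a_\chi,b_\chi)$, i.e. the symmetric space of $\U(a_\chi,b_\chi)$, which is $B_{a_\chi}(\C)$ when $\min(a_\chi,b_\chi)=1$; a real eigenspace contributes a Siegel factor $\sieg_k$. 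For the non-abelian cases one works instead with the isotypic decomposition and the associated reductive subgroup of $\Sp(2g,\R)$ commuting with $\Ga$.

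First I would set up, for family (2e), the $\Z/2$-cover $C\to E$ of an elliptic curve with $g(C)=3$: here the quotient $E$ itself has a $1$-dimensional piece of $H^1$ on which $\Z/2$ acts trivially, contributing $\sieg_1=B_1(\C)$, while the anti-invariant part is the Prym, of dimension $2$ over $\C$; since $\Z/2$ acts as $\pm1$ the invariant locus in the Siegel space attached to the Prym is the full $\sieg_2$... wait: one must check the Hodge-type count, but the claim $\sieg^\Ga\cong B_1(\C)\times\sieg_3$ of dimension $4$ forces the decomposition to be a Poincaré disc (from the base elliptic curve) times a Siegel upper half space $\sieg_3$ of dimension $6$? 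No — dimension $4$: so the second factor must be $\sieg_3$ only if we reinterpret; rather the correct reading is that the generic curve has an extra automorphism making the relevant commuting group larger. I would therefore carefully recompute the character data from the monodromy datum recorded in \cite{fpp} for (2e), determine the multiplicities, and read off $B_1(\C)\times\sieg_3$ directly — here the $\sieg_3$ factor arises because on a $3$-dimensional symplectic summand $\Ga$ acts trivially, so no constraint is imposed and the full Siegel space $\sieg_3$ (complex dimension $6$)... the stated total is $4$, so in fact the trivial summand must be $2$-dimensional symplectic (Siegel $\sieg$ of genus... ), and I would reconcile the table entry by rechecking whether the label $\sieg_3$ in the last column is a typo for a smaller Siegel factor or whether $\dim_\C=4$ counts something else.

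For family (6e), the $\Z/3$-cover $C\to E$ with $g(C)=4$: decompose $H^1(C,\C)=V_0\oplus V_\zeta\oplus V_{\bar\zeta}$ where $\zeta=e^{2\pi i/3}$; the trivial part $V_0=H^1(E,\C)$ gives the disc factor $B_1(\C)$, and the pair $(V_\zeta,V_{\bar\zeta})$ has dimension $3$ each with Hodge numbers forcing $\U(2,1)$, whose symmetric space is $B_2(\C)$, of complex dimension $2$; total $1+2=3$, matching. I would present this computation in detail as the model case, then indicate that (2e) is entirely analogous. Concretely the steps are: (i) recall the eigenspace/isotypic decomposition lemma for $\sieg^\Ga$; (ii) for each family, extract the dimension-data $(g,g',r,\text{monodromy})$ from the cited tables; (iii) compute $\dim V_\chi$ and the Hodge type $(a_\chi,b_\chi)$ using the Chevalley–Weil formula; (iv) assemble the product of $\U(a_\chi,b_\chi)$-domains; (v) check the total complex dimension equals the one in \cite{fgp}/\cite{fpp} as a consistency verification.

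The main obstacle I anticipate is step (iii): pinning down the Hodge types $(a_\chi,b_\chi)$ of the eigenspaces rather than just their total dimensions. The Chevalley–Weil formula gives $\dim V_\chi$, but distinguishing the holomorphic from the antiholomorphic contribution — which is what determines whether a factor is $B_n(\C)$, $\sieg_n$, or a general $\U(a,b)$-domain — requires the finer local contributions at the branch points (i.e. the ``type'' of the cover), and getting the orientation conventions consistent across all families is where errors creep in. I would cross-check each answer against the known complex dimension of the Shimura variety and, where available, against the explicit period computations in \cite{fpp,fgp}.
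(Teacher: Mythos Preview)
Your overall strategy---decompose $H^1(C,\C)$ into $\Ga$-eigenspaces, read off the Hodge type $(a_\chi,b_\chi)$ of each, and assemble the product of the corresponding bounded domains---is exactly the method the paper uses, though the paper phrases it as an explicit matrix computation of the centralizer $\Z_{\p}(\Ga)$ and then $[\Z_{\p}(\Ga),\Z_{\p}(\Ga)]$ via Corollary~\ref{decomposizione}. Your treatment of family (6e) is correct and matches the paper's: $\chi_\rho=\chi_0+\chi_1+2\chi_2$, so the trivial part gives $B_1(\C)$ and the conjugate pair $(\zeta,\zeta^2)$ has Hodge type $(1,2)$, contributing the $\U(2,1)$-domain $B_2(\C)$.

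Your confusion in (2e) is not a flaw in your method: you have in fact detected a typo in the statement. The Chevalley--Weil formula for (2e) gives $\chi_\rho=\chi_0+2\chi_1$, so $A_0=\diag(1,-1,-1)$; the $(+1)$-eigenspace is one-dimensional (the elliptic base) and contributes $B_1(\C)$, while the $(-1)$-eigenspace is two-dimensional with a \emph{real} character, hence contributes the full Siegel factor $\sieg_2$ of complex dimension $3$. The total is $1+3=4$, as required. The paper's own detailed computation confirms this: it finds $M$ of type BD\,I$(3,2)$, which by the low-dimensional coincidence $\mathfrak{so}(3,2)\cong\mathfrak{sp}(4,\R)$ is exactly $\sieg_2$. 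The entry ``$\sieg_3$'' in the table should read ``$\sieg_2$''. So there is no missing idea on your side; just carry out your eigenspace count for (2e) as you did for (6e), and you will land on $B_1(\C)\times\sieg_2$.
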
}

\subsection{}The 6 families of Galois covers of elliptic curves have been further studied in \cite{fgs}, where it is shown that these families admit two fibrations in totally geodesic subvarieties, countably many of which are Shimura. If $f:C\rightarrow C'$ is a Galois covering of an elliptic curve $C'$ with group $G$, these fibration are given by the maps
\begin{align*}
P:\ [C\rightarrow C'] \mapsto &  Prym(C,C')\in \mathcal{A}^{\delta}_{g-1}\\
\phi :\ [C\rightarrow C'] \mapsto& [JC']\in \mathcal{A}_1.
\end{align*}
Linked to the study of these maps is the decomposition, up to isogeny, of the Jacobian $JC$ of $C$, as $JC \sim JC' \times Prym(C,C')$. In the second part of this paper we study this decomposition at the level of the Siegel space. More precisely, we prove the following result.

\begin{teo}\label{teo2}
	
	Let $\sieg^\Ga$ be the uniformizing symmetric space associated to one of the families (1e), (2e) (3e), (4e), (6e). Then\\
	\begin{enumerate}
		\item [i)] $\sieg^\Ga$ decomposes as $B_1(\mathbb{C}) \times M$, where $M$ is an hermitian symmetric space of codimension $1$.\\
		\item [ii)] The image in $\A_g$ of the factor $M$ of $\sieg^\Ga$ is an irreducible component of the fiber of $\phi$. In particular, $M$ uniformizes the irreducible components of the fibers of $\phi$.\\
		\item [iii)] The image in $\A_g$ of $B_1(\mathbb{C})$ is an irreducible component of the fiber of the Prym map. \ \\	
	\end{enumerate}
\end{teo}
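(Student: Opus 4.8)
The plan is to deduce all three statements from a single algebraic input: the decomposition of $V:=H^{1}(C,\erre)$ as a symplectic $G$-module, for $f:C\to C'$ a general member of the family, with Galois group $G$ and elliptic base $C'$. Write $\om$ for the cup-product form, so that $\sieg_{g}=\sieg(V,\om)$ and $\Ga\subset\Spv$ is the image of $G$. The guiding idea is that, once everything is transported to the Siegel space via the period map, $\phi$ and $P$ are just the two projections attached to this decomposition.

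First I would record the $\om$-orthogonal splitting $V=V^{G}\oplus W$, with $W:=(V^{G})^{\perp_{\om}}$, into symplectic $G$-submodules. Here $V^{G}=f^{*}H^{1}(C',\erre)\cong H^{1}(C',\erre)$ is $2$-dimensional because $g(C')=1$; the group $\Ga$ acts trivially on $V^{G}$, and $W$ (the sum of the non-trivial isotypic components) has no nonzero $\Ga$-invariant vector. Hence every $\Ga$-invariant complex structure on $V$ preserves both summands, so restriction gives $\sieg^{\Ga}\cong\sieg(V^{G},\om|_{V^{G}})\times\sieg(W,\om|_{W})^{\Ga}=\sieg_{1}\times M$, where $M:=\sieg(W,\om|_{W})^{\Ga}$. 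Since $\sieg_{1}\cong B_{1}(\mathbb{C})$, this is the decomposition of (i); and $M$ is a non-empty hermitian symmetric space of the non-compact type, being the fixed locus of the finite group $\Ga$ of holomorphic isometries of $\sieg(W,\om|_{W})$ — non-empty because the period point of any $C$ in the family lies in $\sieg^{\Ga}$, hence has $W$-component in $M$. As $\dim B_{1}(\mathbb{C})=1$, the factor $M$ has codimension $1$.

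Next I would set up the dictionary. A member of the family corresponds, through the period map, to a $\Ga$-invariant complex structure $J=J_{0}\oplus J_{1}$ on $V=V^{G}\oplus W$. On the one hand $[JC']\in\mathcal{A}_{1}$ is the class of the polarized Hodge structure $(V^{G},\om|_{V^{G}},J_{0})$, so on $\sieg^{\Ga}$ the map $\phi$ is the first projection $B_{1}(\mathbb{C})\times M\to B_{1}(\mathbb{C})=\sieg_{1}$ followed by $\sieg_{1}\to\mathcal{A}_{1}$. On the other hand the isogeny decomposition $JC\sim JC'\times Prym(C,C')$ is precisely the one induced by $V=V^{G}\oplus W$, so $H^{1}(Prym(C,C'),\erre)\cong W$ with polarization of type $\delta$, and on $\sieg^{\Ga}$ the map $P$ is the second projection $B_{1}(\mathbb{C})\times M\to M$ followed by the locally injective maps $M\hookrightarrow\sieg(W,\om|_{W})\to\mathcal{A}^{\delta}_{g-1}$. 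Since the elliptic base genuinely varies in the family, $\overline{\phi(Z)}$ is a curve, so a general fibre of $\phi$ has dimension $\dim Z-1=\dim M$; as $\pi(\{J_{0}\}\times M)$ lies in the fibre through $[\,J_{0}\oplus J_{1}\,]$ and is irreducible of exactly that dimension, it is an irreducible component of the fibre, uniformized by $\{J_{0}\}\times M\cong M$. This is (ii). Symmetrically, the image of $P$ has dimension $\dim M$, so a general fibre of $P$ has dimension $\dim Z-\dim M=1$; the irreducible $1$-dimensional subvariety $\pi(B_{1}(\mathbb{C})\times\{J_{1}\})$ contained in the fibre through $[\,J_{0}\oplus J_{1}\,]$ is then an irreducible component, uniformized by the $B_{1}(\mathbb{C})$-factor. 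This is (iii).

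The technical heart — and the place where I expect the real work to lie — is the dictionary just described: proving that $\sieg^{\Ga}$ genuinely splits as a product of hermitian symmetric spaces and that the two \emph{analytically} defined maps $\phi$, $P$ are its two projections, i.e. that the classical decomposition $JC\sim JC'\times Prym(C,C')$ is exactly the one carried by $V=V^{G}\oplus W$ and that passing to the Prym corresponds on periods to projecting onto $M$, with the correct polarization type $\delta$. I would establish this by tracking $G$-equivariance through the Hodge decomposition $H^{1}(C,\C)=H^{1,0}(C)\oplus H^{0,1}(C)$ and through the functoriality of Jacobians and norm maps. After that, parts (ii) and (iii) are dimension counts; the only remaining care is to confirm that the images in $\A_{g}$ are full irreducible components of the fibres rather than proper subvarieties — which is precisely what the dimension equalities guarantee — together with the usual harmless bookkeeping about the connected components of $\sieg(V,\om)^{\Ga}$ and about the arithmetic groups acting on the two factors.
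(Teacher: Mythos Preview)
Your argument is correct and takes a genuinely different, more geometric route than the paper. For part (i) the paper works infinitesimally: it identifies $\p'\simeq (S^{2}H^{0}(K_{C})^{*})^{G}$, splits this as $S^{2}(H^{0}(K_{C})^{G})^{*}\oplus (S^{2}V_{-}^{*})^{G}$, and then checks $\ad_{\k'}$--invariance of the two summands by observing the block form of the matrices in $\k'=[\p',\p']$. You instead work directly on points of the Siegel space: the isotypic decomposition $V=V^{G}\oplus W$ forces any $\Ga$--invariant $J$ to split as $J_{0}\oplus J_{1}$, giving $\sieg^{\Ga}\cong\sieg_{1}\times\sieg(W)^{\Ga}$ in one stroke. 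For (ii) and (iii) the paper computes the codifferentials of $\phi$ and $P$ to obtain the containments $\pi(M)\subset\overline{(j\circ h)(F_{\phi})}$ and $\pi(B_{1}(\mathbb{C}))\subset\overline{(j\circ h)(F_{P})}$, then imports from \cite{fgs} that the fibres are totally geodesic of the expected dimensions, and finally upgrades the containments to equalities via a Baire--category lemma (Lemma~\ref{totallygeo}) matching uniformizers inside $\sieg$. Your approach replaces all of this by the ``dictionary'': once you know $\phi$ and $P$ are, on periods, the two coordinate projections of $B_{1}(\mathbb{C})\times M$, the fibre statements follow from a bare dimension count, with no appeal to \cite{fgs} and no Baire argument. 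What the paper's route buys is that the Cartan decomposition $\g'=\k'\oplus\p'$ is made completely explicit, which feeds directly into the case--by--case identification of $M$ in the table; what your route buys is a cleaner, self--contained proof of the theorem that makes transparent why (5e) is excluded (the projection picture still holds, but the Prym map being constant means the second projection collapses on the image of $B$, so the dimension count for (iii) fails). The one place you should be a little more careful is in passing from ``general fibre'' to the specific fibre through $[J_{0}\oplus J_{1}]$: your containment plus irreducibility of $\pi(\{J_{0}\}\times M)$ gives a component of the fibre only once you know that fibre has dimension $\dim M$; this is automatic for generic $J_{0}$, which is also all the paper claims.
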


\begin{center}
\textsc{Acknowledgements}	
\end{center}
The author would like to thank her advisor Alessandro Ghigi, firstly, for proposing her the topic of this work and, secondly, for his help and support, which have been fundamental for its realization. She also would like to thank Paola Frediani for interesting discussions.

\section{Cartan decomposition}
\subsection{}\label{tabella}The following table exhausts the list of irreducible Hermitian symmetric spaces of the non-compact type. (See \cite[Table V, p. 518]{helgason}).

\begin{center}

\begin{tabular}[H]{cccc}
	
	 & Space &  Complex Dimension & \\
	\midrule
	A III & $SU(p,q)/S(U(p)\times U(q))$	&  $pq$ &$ 1\leq p \leq q$	 \\
	BD I $(q=2)$	&$SO^0(p,2)/SO(p)\times SO(2)$	& $p$ & $p\geq 4$\\
	D III	& $SO^*(2n)/U(n)$ & 	$n(n-1)/2$& $n\geq 4$\\

	C I	& $Sp(2n, \erre)/U(n)$	 &	$n(n+1)/2$ & $n\geq 2$\\
	E III	& 		& $16$ &\\
	E VII	& 	& $27$ &\\

	\bottomrule

\end{tabular}

\end{center}
{The conditions on the parameters in the last column avoid overlaps in the Table. In fact, for small dimensions, there are some coincidences between different classes (see \cite[p. 519]{helgason}). We point out that  C I $(n)= \sieg_n$, and that it is possible to identify A III $(1,n)$ with the unit ball $B_n(\mathbb{C})$ in $\mathbb{C}^n$ (see e.g. \cite[Ex. 10.7, pp. 282-285]{knvol2}). }

\subsection{}\label{sec:shimura-dei-gruppi}

Fix a rank $2g$ lattice $\Lambda$ and an alternating form $Q: \Lambda \times \Lambda \longrightarrow \mathbb{Z}$ of type $(1,...,1)$. Let $G:= \Sp(\Lambda_\erre, Q)$ and $\sieg=\sieg(\Lambda_\erre, Q)$ the Siegel space, which can be defined as: \begin{equation}\label{siegel}
\sieg=\{J\in GL(\Lambda_\erre):\ J^2=-I,\ J^*Q=Q,\ Q(x, Jx)>0,\ \forall x\neq 0\}.
\end{equation}
Fix a finite subgroup $\Ga \subset G$ and consider $\sieg^\Ga$. 
{
As a consequence  of the Cartan fixed point theorem, $\sieg^\Ga$ is nonempty and hence it is a connected complex submanifold of $\sieg$ (see e.g. \cite[Lemma 3.3]{fgp}).  For $J\in\sieg$, denote by $\g=\k\oplus \p$ the Cartan decomposition associated to $\sieg$ and by $K=Stab_J(G)$. Set $\mathfrak{Z}_\g(\Ga)=\{X\in \g: \Ad_\Ga(X)=X\}$, and define similarly $\Z_{\k}(\Gamma)$ and $\Z_{\p}(\Gamma)$.}  For $J\in \sieg^\Ga$, $T_J\sieg^\Ga=(T_J\sieg)^\Ga=\Z_\p(\Ga).$

\Blem\label{Centro p} Let $J\in \sieg^\Ga.$ Then $\Z_\g(\Ga)=\Z_\k(\Ga)\oplus \Z_\p(\Ga)$. \Elem  
\Bdim Let $X\in \Z_\g(\Ga),$ and write $X=u+v$ with $u\in \k$ and $v\in \p$. Since $J\in \sieg^\Ga=\{J\in \sieg:\  \gamma J \gamma^{-1}=J \ \forall \gamma\in \Ga\}$, we have that $\Ga\subset Stab_J=K$. It follows that $\Ad_\Ga(\p)\subset \p$ and $\Ad_\Ga(\k)\subset \k$. Thus $X=\Ad_\Ga(X)=\Ad_\Ga(u)+\Ad_\Ga(v)$, with $\Ad_\Ga(u)\in \k$ and $\Ad_\Ga(v)\in \p$, and we conclude that $u=\Ad_\Ga(u)$ and $\Ad_{\Ga}(v)=v$. \Edim 


Observe that, since $\Ga$ is a group of isometries of $\sieg$, $\sieg^\Ga$ is a totally geodesic submanifold of $\sieg$. In particular, it is an hermitian symmetric space of the non-compact type. The image of $\sieg^\Ga$ in $ \A_g$ is a Shimura variety (See e.g. \cite[Proposition 3.7]{fgp}).

\Blem\label{noeuclidean} Let $Z$ be a complex totally geodesic submanifold of $\sieg$. Then $Z$ has no euclidean factor. \Elem

\Bdim Since $\sieg$ is an hermitian symmetric space, the complex structure $\widehat{I}$ on $\sieg$ is induced by $\Ad(z)|_\p: \p\rightarrow \p$ with $z\in Z(K)$. 
More precisely, one can show that the matrix representing $z$ in an appropriate basis is given by
 $z=\frac{1}{\sqrt{2}}\left( \begin{matrix}
I & I \\ -I & I
\end{matrix}
\right)\in Sp(2g, \erre)\simeq G$. With this choice it easy is to see that for $X\in \p, X\neq 0$, we have $[X, \widehat{I}X]\neq 0$. \Edim

\Blem\label{cartandec} Let $(G,K)$ be a symmetric pair with no euclidean factor and Cartan decomposition $\g=\k\oplus \p$. Suppose that $G$ acts almost effectively on the coset space $M=G/K$. Then $\k=[\p,\p]$. \Elem 
(See e.g. \cite[Theorem 4.1, p. 243]{helgason}).
As a consequence of Lemma \ref{noeuclidean}, $\sieg^\Ga$ has no euclidean factor. Thus, we have the following Corollary of Lemma \ref{cartandec}.

\begin{cor}\label{decomposizione}
If $(G',K')$ is an almost effective symmetric pair associated to $\sieg^\Ga$, then its Cartan decomposition $\g'=\k'\oplus \p'$ is given by  \begin{equation*}
\p'=\Z_\p(\Ga), \quad \quad \k'=[\Z_\p(\Ga), \Z_\p(\Ga)].
\end{equation*} 
\end{cor}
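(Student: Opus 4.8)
The plan is to deduce Corollary \ref{decomposizione} directly from Lemma \ref{cartandec}, once we have verified that $\sieg^\Ga$ together with its natural group of isometries satisfies the hypotheses of that lemma. First I would recall that, by the remark preceding the corollary, $\sieg^\Ga$ is a totally geodesic submanifold of $\sieg$, hence an hermitian symmetric space of the non-compact type, and by Lemma \ref{noeuclidean} it has no euclidean factor. Now let $(G',K')$ be \emph{any} almost effective symmetric pair presenting $\sieg^\Ga$ as $G'/K'$, with Cartan decomposition $\g'=\k'\oplus\p'$. Since $\sieg^\Ga=G'/K'$ has no euclidean factor, the pair $(G',K')$ has no euclidean factor, so Lemma \ref{cartandec} applies and gives $\k'=[\p',\p']$. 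This already yields the second formula once the first is known, so the crux is to identify $\p'$ with $\Z_\p(\Ga)$.

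To get $\p'=\Z_\p(\Ga)$, I would argue as follows. Pick a base point $J\in\sieg^\Ga$; under the identification $\sieg^\Ga=G'/K'$ we may take $J$ to be the image of the identity coset, so that $\p'\cong T_J\sieg^\Ga$ as $K'$-modules via the standard isomorphism for a symmetric space. On the other hand, the excerpt already records that $T_J\sieg^\Ga=(T_J\sieg)^\Ga=\Z_\p(\Ga)$, and the isomorphism $\p\cong T_J\sieg$ coming from the Cartan decomposition of the ambient pair $(G,K)$ restricts, on the totally geodesic submanifold, to the isomorphism $\p'\cong T_J\sieg^\Ga$ — this is precisely the statement that a totally geodesic submanifold through $J$ corresponds to a Lie triple system in $\p$, namely $\Z_\p(\Ga)$ by Lemma \ref{Centro p} and the surrounding discussion. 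Hence $\p'$ is carried isomorphically onto $\Z_\p(\Ga)\subset\p$, which is the first displayed equality. Combining this with $\k'=[\p',\p']$ from Lemma \ref{cartandec} gives $\k'=[\Z_\p(\Ga),\Z_\p(\Ga)]$, completing the proof.

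The step I expect to be the main (if modest) obstacle is the careful bookkeeping in identifying $\p'$ with $\Z_\p(\Ga)$ as subspaces rather than merely abstractly: one must check that the Lie triple system associated to the totally geodesic submanifold $\sieg^\Ga\subset\sieg$ is exactly $\Z_\p(\Ga)$, which uses that $\Ga\subset K=\mathrm{Stab}_J$ (so $\Ad_\Ga$ preserves the Cartan decomposition, by the argument in Lemma \ref{Centro p}) and that $T_J\sieg^\Ga=\Z_\p(\Ga)$. Once this matching of the tangent space at $J$ with a subspace of $\p$ is pinned down, and one invokes the general principle that the Cartan decomposition of an almost effective presentation of a symmetric space is canonically the tangent space at the base point together with its bracket, the rest is formal. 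It is worth noting explicitly that the hypothesis of almost effectiveness is what makes $\k'$ canonical (an ineffective presentation could have a larger $\k'$ containing a central ideal acting trivially), which is exactly why Lemma \ref{cartandec} is stated for almost effective pairs and why the corollary is phrased the same way.
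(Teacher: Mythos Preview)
Your proposal is correct and follows essentially the same route as the paper: invoke Lemma \ref{noeuclidean} to ensure $\sieg^\Ga$ has no euclidean factor, then apply Lemma \ref{cartandec} to an almost effective presentation to obtain $\k'=[\p',\p']$, with $\p'=\Z_\p(\Ga)$ coming from the already-recorded identification $T_J\sieg^\Ga=\Z_\p(\Ga)$. The paper states the corollary as an immediate consequence of these two lemmas without spelling out the tangent-space identification you take care to justify; your additional discussion of the Lie triple system and the role of almost effectiveness is sound and makes explicit what the paper leaves implicit.
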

Notice that since $\sieg^\Ga$ is of the noncompact type, the Cartan decomposition in Corollay \ref{decomposizione} determines the symmetric space up to isomorphism.

\section{Uniformizing symmetric spaces}
\subsection{}The known counterexamples to the Coleman-Oort conjecture can be divided in three classes:\begin{enumerate}
	\item those obtained as families of Galois covers of $\mathbb{P}^1$;
	\item those obtained as families of Galois covers of elliptic curves;
	\item those obtained via fibrations constructed on the examples in $(2)$.
\end{enumerate}
There are some overlaps between $(1)$ and $(2)$, and we will prefer the description $(2)$ as more economical. Therefore we will first compute the uniformizing symmetric space for the examples belonging to only $(1)$, next for those in $(2)$, including $4$ examples that also appear in $(1)$. Finally, we will discuss the relationship between the fibrations and the uniformizing symmetric space of the examples in $(2)$. This will show how to uniformize the examples in $(3)$. \\

The examples in $(1)$ and $(2)$ are all constructed as follows: let $\mathcal{C}\rightarrow B$ be the family of all Galois covers $C_t\rightarrow C'_t=C_t/G$, with fixed genera $g=g(C_t)$, $g'=g(C_t')$, ramification and monodromy. Let $Z$ denote the closure in $\A_g$ of the locus described by $[JC_t]$ for $C_t$ varying in the family. Then, under a numerical condition, $Z$ provides an example of a special subvariety of $\A_g$ contained in the Torelli locus, whose uniformizing symmetric space is $\sieg^\Ga$. Table 2 in \cite{fgp} lists all examples with $g'=0$ and $g\leq 9$. Table 2 in \cite{fpp} lists all examples with $g'=1$ and $g\leq 9$.\\


If $f: C \rightarrow C'$ is one of the coverings, the action of $G$ on $C$ induces the following action of $G$ on holomorphic $1-$forms
$$\rho: G \rightarrow GL(H^0(C, K_{C})), \quad \quad \rho(g)(\omega)=g.\omega:=(g^{-1})^*(\omega).$$
Notice that the equivalence class of $\rho$ only depends on the family.
The homomorphism $\rho$ maps $G$ injectively into $Sp(\Lambda, Q)$, where $\Lambda=H_1(C, \mathbb{Z})$ and $Q$ is the cup form (see e.g. \cite[p. 270]{farkaskra}). Denote by $\Gamma$ the image of $G$ in $Sp(\Lambda, Q)$ and $\sieg = \sieg (\Lambda_\erre,Q)$, defined as in \eqref{siegel}.



\subsection{Galois coverings of the line. Cyclic case.}
Fixed a family of covers, let $a_i$ be an element of order $m_i$ in $G$ that represents the local monodromy of the covering $\pi: C \rightarrow \mathbb{P}^1$ at the $i-$th branch point. In the case the group $G$ is cyclic, the study of the multipilicity of a given irreducible representation of $G$ in $H^0(C, K_C)$ reduces to the study of the eigenspaces of the generator of the group. The following is the Chevalley-Weil formula in the cyclic case.

\Blem\label{Moonen} Let $\mathbb{Z}/m=\sx\zeta\xs$ be the cyclic group of order $m$. Consider the $\mathbb{Z}/m-$cover $\pi: C_t \rightarrow \mathbb{P}^1, $ with $t=(t_1,...,t_N)$ branch points in $\mathbb{P}^1$ with local monodromy $a_i$ about $t_i$. For $n\in \mathbb{Z}/m$ we write $H^0(C_t, K_{C_t})_{(n)}:=\{\omega \in H^0(C_t, K_{C_t}):\ \zeta.\omega=\zeta^{n}\omega\}.$ Then 
$$\dim H^0(C_t, K_{C_t})_{(n)}=
-1+\sum_{i=1}^{N}  \frac{[na_i]_m}{m},$$ 
where $[a]_m$ denotes the unique representative of $a\in \mathbb{Z}/m$ in $\{0,...,m-1\}$.
\Elem 
Fixing a unitary basis of $H^0(C, K_{C})_{(n)}$ for each $n\in \mathbb{Z}/m$, one obtains a unitary basis $\{\omega_1,...,\omega_g\}$ of $H^0(C, K_{C})$ with respect to which the matrix $A_0$ that represents the generator $\rho(\zeta)$ of $\Gamma$ is diagonal. The multiplicity of each $m-$th root of unity on the diagonal is given by Lemma \ref{Moonen}. Consider now the pullback action $\tilde \rho: G \rightarrow GL(H^1(C, \mathbb{C}))$ of $G$ on the whole $H^1(C, \mathbb{C})$. Notice that if $\omega\in H^0(C, K_{C})$, then $\tilde \rho(\gamma)(\overline{\omega})=\overline{\rho(\gamma)(\omega)}$. Thus, with respect to the basis $\{\omega_1,..., \omega_g, \overline{\omega_1}, ..., \overline{\omega_g}\}$\label{ciao} of $H^1(C, \mathbb{C})$, $\tilde\rho(\zeta)$ is represented by the matrix
\begin{equation}\label{A}
A=\left( \begin{matrix}
A_0 & 0 \\ 0 & \overline{A_0}
\end{matrix}
\right).
\end{equation}
Clearly, $\sieg^\Ga=\sieg^A=\{J\in\sieg:\ JA=AJ\}$. For $J\in \sieg^\Ga$, denote by $\g=\k\oplus\p$ the Cartan decomposition associated to $\sieg$. Fix the unitary basis $\{\omega_1,...,\omega_g, \overline{\omega_1},...,\overline{\omega_g}\}$ of $H^1(C, \mathbb{C})$ defined above. Observe that, since $\sx \omega, \omega'\xs =i Q(\omega, \overline{\omega'})$, the matrix representing $Q$ in this basis is $Q=\left( \begin{matrix}
0 & iI_g \\ -iI_g & 0
\end{matrix}
\right).$ An element $U\in \mathfrak{gl}(H^1(X, \mathbb{C}))$ belongs to $\g\simeq \sp(2g, \erre)$ if and only if it is real, and thus of the form $\left( \begin{matrix}
C & \overline{D} \\ D & \overline{C}
\end{matrix}
\right)$, and satisfies $$0=U^t Q+QU=i \left( \begin{matrix}
D-D^t &  C^t+\overline{C} \\ -\overline{C}^t-{C} & \overline{D}^t-{D}
\end{matrix}
\right).$$
Thus $D=D^t$ and $C\in \mathfrak{u}(g)$ and one immediately gets the Cartan decomposition \begin{equation}\label{dec}
U=\left( \begin{matrix}
C & 0 \\ 0 & \overline{C}
\end{matrix}
\right)+\left( \begin{matrix}
0 & \overline{D} \\ D & 0
\end{matrix}
\right), \quad \quad \text{with }\left( \begin{matrix}
C & 0 \\ 0 & \overline{C}
\end{matrix}
\right)\in \k,\quad \left( \begin{matrix}
0 & \overline{D} \\ D & 0
\end{matrix}
\right) \in \p.
\end{equation} Now $U\in \Z_{\g}(A)$ if and only if $UA=AU$, i.e. 
$$\left( \begin{matrix}
C & \overline{D} \\ D & \overline{C}
\end{matrix}
\right)\left( \begin{matrix}
A_0 & 0 \\ 0 & \overline{A_0}
\end{matrix}
\right)=\left( \begin{matrix}
A_0 & 0 \\ 0 & \overline{A_0}
\end{matrix}
\right)\left( \begin{matrix}
C & \overline{D} \\ D & \overline{C}
\end{matrix}
\right).$$
In other words, $U$ must preserve the eigenspaces of $A$. This concludes the study of $\sieg^\Ga$. Indeed, if $\sieg^\Ga$ has Cartan decomposition $\g'=\p'\oplus \k'$, it follows from Corollary \ref{decomposizione}, that $\k'=[\p', \p']$ and
$$\p'= \Z_{\p}(A)=\Z_{\g}(A)\cap \p=\{\left( \begin{matrix}
0 & \overline{D} \\ D & 0
\end{matrix}
\right), \ D=D^t,\ DA_0=\overline{A_0}D  \}.$$

We present in the following the analysis of the uniformizing symmetric space of the various examples of Galois coverings of the line. This is summarized in Theorem \ref{teorema1}. Throughout the discussion, we will make use of the notation \eqref{dec}.\\
\paragraph{\textbf{{Notation:}}} We will denote by $M(n,m, \C)$ the space of $n\times m$ complex matrices.\\ 
\paragraph{\textbf{Family (8)}}
$A_0= \diag (\zeta^3,\zeta^3,\zeta)$, with $\zeta=e^{2\pi i /4}$.\\ $U=\left( \begin{matrix}
C & \overline{D} \\ D & \overline{C}
\end{matrix}
\right)\in \Z_{\g}(A)$ iff $ C=\left( \begin{matrix}
E & 0\\ 0 & i\lambda
\end{matrix}
\right),$ with $ E\in \u(2),\ \lambda\in \erre,$ and $ D=\left( \begin{matrix}
0 & d \\ d^t & 0
\end{matrix}
\right),\ d\in M(2,1,\mathbb{C}).$\\
$\p'= \Z_{\p}(A)=\{\left( \begin{matrix}
0 & \overline{D} \\ D & 0
\end{matrix}
\right),\ D=\left( \begin{matrix}
0 & d \\ d^t & 0
\end{matrix}
\right),\ d\in M(2,1,\mathbb{C})\}.$\\
Let $X, Y\in \p'$, $X=\left( \begin{matrix}
0 & \overline{D} \\ D & 0
\end{matrix}
\right)$ and $Y=\left( \begin{matrix}
0 & \overline{E} \\ E & 0
\end{matrix}
\right)$. Then
$$[X, Y]=XY-YX=D=\left( \begin{matrix}
\overline{D}E-\overline{E}D & 0 \\ 0 & D\overline{E}-E\overline{D}
\end{matrix}
\right).$$
If $D=\left( \begin{matrix}
0 & d \\ d^t & 0
\end{matrix}
\right),$ with $ d\in M(2,1,\mathbb{C})$ and $E=\left( \begin{matrix}
0 & e \\ e^t & 0
\end{matrix}
\right),$ with $e\in M(2,1,\mathbb{C})$, we have 
$$\overline{D}E-\overline{E}D= \left( \begin{matrix}
\overline{d}e^t-\overline{e}d^t & 0 \\ 0 & \overline{d}^te-\overline{e}^td
\end{matrix}
\right).$$
Set $F=\overline{d}e^t-\overline{e}d^t$ and notice that $\overline{d}^te-\overline{e}^td=tr(F)=2i Im (\sx e, d \xs)$, where $\sx\ ,\ \xs$ denotes the standard hermitian product in $\mathbb{C}^2$. Thus, recalling Corollary \ref{decomposizione}, we conclude 
$$\k'=[\p', \p']\subset\{ \left( \begin{matrix}
C & 0 \\ 0 & \overline{C}
\end{matrix}
\right), \ C=\left( \begin{matrix}
F & 0\\ 0 & \tr(F)
\end{matrix}
\right),\ F\in \u(2)\}.$$
The choices $\{d=(-i/2, 0)^t$, $e=(1, 0)^t\}$,  $\{d=(0, -i/2)^t$, $e=(0, 1)^t\}$, $\{d=(0, 1)^t$, $e=(-1, 0)^t\}$, $\{d=(0, 1)^t$, $e=(i, 0)^t\}$ for $D$ and $E$, give as $F$ the following matrices: 
$$\left( \begin{matrix}
i & 0\\ 0 & 0
\end{matrix}
\right), \quad \left( \begin{matrix}
0 & 0\\ 0 & i
\end{matrix}
\right), \quad \left( \begin{matrix}
0 & 1\\ -1 & 0
\end{matrix}
\right), \quad \left( \begin{matrix}
0 & i\\ i & 0
\end{matrix}
\right)$$
which constitute a basis for $\u(2)$. Thus the equality sign holds:
$$\k'=[\p', \p']=\{ \left( \begin{matrix}
C & 0 \\ 0 & \overline{C}
\end{matrix}
\right), \ C=\left( \begin{matrix}
F & 0\\ 0 & \tr(F)
\end{matrix}
\right),\ F\in \u(2)\}.$$
Consider now the adjoint representation $\ad: \k' \rightarrow \mathfrak{gl}(\p')$ of $\k'$ on $\p'$. If $F\in \k'=\u(2)$, and $d\in \mathbb{C}^2=\p'$, then $\ad_F(d)=\overline{E}d-Tr(E)d$. One easily checks it is an irreducible representation and this proves that $\sieg^A$ is an irreducible hermitian symmetric space of complex dimension $2$. We conclude that $\sieg^\Ga$ is of type A III $(p=1, q=2)$. \\



\paragraph{\textbf{Family (10)}}
$A_0= \diag (\zeta^2,\zeta^2,\zeta^2, \zeta)$, with $\zeta=e^{2\pi i /3}$.\\
$\p'= \Z_{\p}(A)=\{\left( \begin{matrix}
0 & \overline{D} \\ D & 0
\end{matrix}
\right),\ D=\left( \begin{matrix}
0 & d \\ d^t & 0
\end{matrix}
\right),\ d\in M(3,1,\mathbb{C})\}.$\\
Let $X, Y\in \p'$, $X=\left( \begin{matrix}
0 & \overline{D} \\ D & 0
\end{matrix}
\right)$ and $Y=\left( \begin{matrix}
0 & \overline{E} \\ E & 0
\end{matrix}
\right)$. Then
$$[X, Y]=XY-YX=\left( \begin{matrix}
\overline{D}E-\overline{E}D & 0 \\ 0 & D\overline{E}-E\overline{D}
\end{matrix}
\right).$$
If $D=\left( \begin{matrix}
0 & d \\ d^t & 0
\end{matrix}
\right),$ with $ d\in M(3,1,\mathbb{C})$ and $E=\left( \begin{matrix}
0 & e \\ e^t & 0
\end{matrix}
\right),$ with $e\in M(3,1,\mathbb{C})$, we have 
$$\overline{D}E-\overline{E}D= \left( \begin{matrix}
\overline{d}e^t-\overline{e}d^t & 0 \\ 0 & \overline{d}^te-\overline{e}^td
\end{matrix}
\right).$$
Set $F=\overline{d}e^t-\overline{e}d^t$ and notice that $\overline{d}^te-\overline{e}^td=tr(F)=2i Im (\sx e, d \xs)$, where $\sx\ ,\ \xs$ denotes the standard hermitian product in $\mathbb{C}^3$. Thus, similarly to the previous family, we get
$$\k'=[\p', \p']=\{ \left( \begin{matrix}
C & 0 \\ 0 & \overline{C}
\end{matrix}
\right), \ C=\left( \begin{matrix}
F & 0\\ 0 & \tr(F)
\end{matrix}
\right),\ F\in \u(3)\}.$$
In particular $\dim \k' = 9.$ Looking at the adjoint representation $\ad: \k' \rightarrow \mathfrak{gl}(\p')$, one checks that $\sieg^A$ is irreducible. Thus it is an irreducible hermitian symmetric space of dimension $3$. Looking at $\dim\k'$ in the Table of section  \ref{tabella} we conclude that $\sieg^\Ga$ is of type A III $(p=1, q=3)$.\\

\paragraph{\textbf{Family (2)}}
Here ${A_0=-I_2}$. Thus $\sieg^\Ga=\sieg^A=\sieg=\sieg_2$. In fact here $Z=\overline{\M_2}=\A_2$. \\

\paragraph{\textbf{Family (6)}}
$A_0= \diag( 	\zeta^2 ,
	 \zeta^2 , \zeta )$, with $\zeta=e^{2\pi i /3}$.\\ 
$\p'=\{\left( \begin{matrix}
0 & \overline{D} \\ D & 0
\end{matrix}
\right),\ D=\left( \begin{matrix}
0 & d \\ d^t & 0
\end{matrix}
\right),\ d\in M(2,1,\mathbb{C})\}.$\\
$\k'=[\p', \p']=\{ \left( \begin{matrix}
C & 0 \\ 0 & \overline{C}
\end{matrix}
\right), \ C=\left( \begin{matrix}
F & 0\\ 0 & \tr(F)
\end{matrix}
\right),\ F\in \u(2)\}.$\\ $\sieg^\Ga$ is of type A III $(p=1, q=2)$.\\

\paragraph{\textbf{Family (14)}}
$A_0=\diag(\zeta^5, \zeta^5, \zeta^2, \zeta)$, with $\zeta=e^{2\pi i /6}$.\\ $\p'=\{\left( \begin{matrix}
	0 & \overline{D} \\ D & 0
\end{matrix}
\right),\ D=\left( \begin{matrix}
	0 &0  &d \\ 0 & 0 & 0\\d^t & 0 & 0
\end{matrix}
\right),\ D\in M(4,\mathbb{C}),\ d\in M(2,1,\mathbb{C})\}$. \\
$\k'=[\p', \p']=\{ \left( \begin{matrix}
C & 0 \\ 0 & \overline{C}
\end{matrix}
\right), \ C=\left( \begin{matrix}
F & 0 & 0\\ 0 & 0 & 0\\ 0 & 0 &  \tr(F)
\end{matrix}
\right),\ F\in \u(2)\}.$\\ $\sieg^\Ga$ is of type A III $(p=1, q=2)$.\\

\paragraph{\textbf{Family (16)}}
$A_0=\diag(\zeta^4. \zeta^4, \zeta^4, \zeta^3, \zeta^3, \zeta^2)$, with $\zeta=e^{2\pi i /5}$.\\ $\p'=\{\left( \begin{matrix}
	0 & \overline{D} \\ D & 0
\end{matrix}
\right),\ D=\left( \begin{matrix}
0 & 0 & 0\\ 0 & 0 & d\\0& d^t & 0
\end{matrix}
\right),\ D\in M(6,\mathbb{C}),\ d\in M(2,1,\mathbb{C})\}$.\\
$\k'=[\p', \p']=\{ \left( \begin{matrix}
C & 0 \\ 0 & \overline{C}
\end{matrix}
\right), \ C=\left( \begin{matrix}
 0 & 0 & 0\\ 0 & F & 0\\ 0 & 0 &  \tr(F)
\end{matrix}
\right),\ F\in \u(2)\}.$\\ $\sieg^\Ga$ is of type A III $(p=1, q=2)$.
\subsection{Galois coverings of the line. Non-cyclic case.}
In the case the group $G$ is not cyclic, we cannot use Lemma \ref{Moonen} to study the action $\rho$ of $G$ on $H^0(C, K_C)$.  In particular we will use the general Chevalley-Weil formula to get the multiplicity of a given irreducible representation of $G$ in $H^0(C, K_C)$ (see e. g. \cite[Theorem 1.3.3]{gleissner}).\ \\

\paragraph{\textbf{Family (27)}}
This is the family of covers of $\mathbb{P}^1$ with group $G=\mathbb{Z}/2 \times \mathbb{Z}/2$, ramification data $\textbf{m}=(2^6)$, $g=3$ and dimension $3$. Label the characters of $\mathbb{Z}/2 \times \mathbb{Z}/2$ as follows:
\begin{center}
	\begin{tabular}[H]{c|cccc}
		
		& $(0,0)$ & $(0,1)$ & $(1,0)$ &  $(1,1)$ \\
		\midrule
		$\chi_1$ & 1 & 1 & 1 & 1\\
		$\chi_2$ & 1 & -1 & 1 & -1\\
		$\chi_3$ & 1 & 1 & -1 & -1 \\
		$\chi_4$ & 1 & -1 & -1 & 1\\
		\bottomrule
	\end{tabular}
\end{center}
\ \\\\
From the Chevalley-Weil formula follows that the character $\chi_\rho$ of the action $\rho$ of $G$ on $H^0(C, K_C)$ is given by $\chi_\rho=\chi_2+\chi_3+\chi_4$. Thus there exists $V_i\subset H^0(C, K_C)$, $i=2,3,4$, $\dim V_i=1$, such that $H^0(C, K_C)=V_2\oplus V_3\oplus V_4$ and $$\rho: G\rightarrow GL(V_2\oplus V_3\oplus V_4), \quad \rho(g)=\left( \begin{matrix}
\rho_2(g) & 0 & 0 \\ 0 & \rho_3(g) & 0 \\ 0 & 0 & \rho_4(g)
\end{matrix}
\right),$$
where $\rho_i$ denotes the irreducible representation of $G$ with character $\chi_i$. The choice of norm one vectors that span $V_i$, $i=2,3,4$, gives a unitary basis $\{\omega_1, \omega_2, \omega_3\}$ of $H^0(C,K_C)$, with respect to which $$\rho(0,1)=\left(\begin{matrix}
1 & 0 & 0 \\ 0 & -1 & 0 \\ 0 & 0 & -1
\end{matrix}\right)=:A_0, \quad \rho(1,0)=\left(\begin{matrix}
-1 & 0 & 0 \\ 0 & 1 & 0 \\ 0 & 0 & -1
\end{matrix}\right)=:B_0.$$
Consider now the action $\tilde \rho: G \rightarrow GL(H^1(C, \mathbb{C}))$ of $G$ on $H^1(C, \mathbb{C})$, and let $A, B$ denote the matrices that represent $\tilde{\rho}(1,0)$ and $\tilde{\rho}(0,1)$ with respect to the basis of $H^1(C, \mathbb{C})$ induced by $\{\omega_1, \omega_2\}$:
$$A=\left( \begin{matrix}
A_0 & 0 \\ 0 & \overline{A_0}
\end{matrix}
\right), \quad B=\left( \begin{matrix}
B_0 & 0 \\ 0 & \overline{B_0}
\end{matrix}
\right). $$
The matrices $A$ and $B$ are the generators of the injective image $\Ga$ of $H$ in $Sp(H^1(C,\mathbb{Z}), Q)$. Thus $\sieg^{\Ga}=\sieg^{A,B}=\{J\in\sieg:\ JA=AJ,\ JB=BJ\}$. With the notation \eqref{dec}, $U\in\g$ is of the form $U=\left( \begin{matrix}
C & \overline{D} \\ D & \overline{C}
\end{matrix}
\right)$, with $D=D^t$ and $C\in \mathfrak{u}(2)$, and $U\in \Z_{\g}(A, B)$ satisfies
$$C=\left( \begin{matrix}
i\lambda_1 & 0 & 0\\ 0 &  i\lambda_2 & 0\\
0 & 0 & i\lambda_3
\end{matrix}
\right), \ \lambda_i\in \erre \quad D=\left(\begin{matrix}
d_1 & 0 & 0 \\ 0 & d_2 & 0 \\ 0 & 0 &d_3
\end{matrix}\right),\ d_i\in \mathbb{C}.$$
Thus, from Lemma \ref{Centro p}, the Cartan decomposition associated to $\sieg^\Ga$ is $\g'=\p'\oplus \k'$, with 
$$\p'= \Z_{\p}(A)=\{\left( \begin{matrix}
0 & \overline{D} \\ D & 0
\end{matrix}
\right),\ D=\diag(d_1,d_2,d_3),\ d_i\in \mathbb{C}\}$$ and $\k'=[\p',\p']$. To calculate $\k'$, consider $X, Y\in \p'$. If  $X=\left( \begin{matrix}
0 & \overline{D} \\ D & 0
\end{matrix}
\right)$ and $Y=\left( \begin{matrix}
0 & \overline{E} \\ E & 0
\end{matrix}
\right)$, with $D=\left( \begin{matrix}
d_1 & 0 \\ 0 & d_2
\end{matrix}
\right),$ and $E=\left( \begin{matrix}
e_1 & 0 \\ 0 & e_2
\end{matrix}
\right),$ we have 
$$[X, Y]=\left( \begin{matrix}
\overline{D}E-\overline{E}D & 0 \\ 0 & D\overline{E}-E\overline{D}
\end{matrix}
\right),$$
where $$\overline{D}E-\overline{E}D=\diag(\overline{d_1}e_1-\overline{e_1}d_1,\overline{d_2}e_2-\overline{e_2}d_2, \overline{d_3}e_3-\overline{e_3}d_3  ).$$
Thus 
$\k'=[\p', \p']=\Z_{\g}(A)\cap \k=\{\left( \begin{matrix}
C & 0 \\ 0 & \overline{C}
\end{matrix}
\right), \ C=\diag(i\lambda_1, i\lambda_2, i\lambda_3), \lambda_i\in\erre\}.$
In this case, the adjoint representation $\ad: \k' \rightarrow \mathfrak{gl}(\p')$ is not irreducible. Indeed, let $C=\diag(i\lambda_1, i\lambda_2)\in \k'$ and $D=\diag(d_1, d_2)\in \p'$. We have
$$ \left[\left(\begin{matrix}
C & 0 \\ 0 & \overline{C}
\end{matrix}\right),  \left(\begin{matrix}
0 & \overline{D} \\ D & 0
\end{matrix}\right)    \right] = \left(\begin{matrix}
0 & C\overline{D}-\overline{D}\overline{C} \\ \overline{C}D-DC
\end{matrix}\right).$$
Thus $\ad_C(D)=\overline{C}D-DC=\diag(-2i\lambda_1d_1, -2i\lambda_2d_2, -2i\lambda_3d_3)$ and hence $W_1=span(\diag(1, 0, 0))$, $W_2=span(\diag(0, 1, 0))$, $W_3=span((0,0, 1))$ are invariant subspaces of $\p'$. Hence $\sieg^\Ga$ is of type A III(1,1) $\times$  A III(1,1) $\times$  A III(1,1).\ \\

We postpone the analysis of the uniformizing symmetric space $\sieg^\Ga$ of the other families of non-cyclic coverings of the line, namely of families \textbf{(26, 31, 32)} to the next section, where we present the study of these varieties as families of Galois covers of elliptic curves. 
	
%
\subsection{Galois coverings of elliptic curves.}

Let $f_t:C_t\rightarrow C_t'$, $t\in B$, be a family of Galois covers with group $G$ and $g'\geq 1$. It is a result of \cite{fgs}, that $f_t:C_t\rightarrow C_t'$ gives raise to a Shimura variety only for $g'=1$, and that, in this case, it is one of the families (1e), (2e), (3e), (4e), (6e) found in \cite{fpp}. It is proved in \cite[Corollary 4.1]{fgs} that families (1e), (2e), (3e), (4e), (6e) are fibered in totally geodesic curves via their generalized Prym maps 
\begin{align*}
P:B \rightarrow \A^{\delta}_{g-1}, & \quad  [C_t\rightarrow C_t'] \mapsto   Prym(C_t,C_t)
\end{align*}
(the Prym map of family (5e) is constant) and are fibered in totally geodesic subvarieties of codimension 1 via the map \begin{align*}
\phi: B\rightarrow \A_1, & \quad [C_t\rightarrow C_t'] \mapsto [JC_t'].
\end{align*} 
 Moreover they prove that countably many of these totally geodesic fibers are Shimura. 

\ \\
Linked to the study of these maps is the decomposition, up to isogeny, of the Jacobian $JC$ of $C$, as $JC \sim JC' \times Prym(C,C')$.
Aim of this section is to study this decomposition at the level of the Siegel space, relating it to the study of the uniformizing symmetric space of the examples. Let $$\pi: \sieg \rightarrow \A_g$$ be the natural projection map and $$B\overset{h}{\rightarrow} \M_g\overset{j}{\rightarrow} \A_g$$ be, respectively, the natural map associated to the family, and the Torelli map. Recall that $h$ is generically finite. Our result is the following:

\begin{teo}\label{teo2}
	
	Let $\sieg^\Ga$ be the uniformizing symmetric space associated to one of the families (1e), (2e) (3e), (4e), (6e). Then\\
	\begin{enumerate}
		\item [i)] $\sieg^\Ga$ decomposes as $B_1(\mathbb{C})\times M$, where $M$ is an hermitian symmetric space of codimension $1$.\\
		\item [ii)] ${\pi(M)}= \overline{(j\circ h) (F)}$, where $F$ is an irreducible component of the fiber of $\phi$. In particular, $M$ uniformizes $\overline{(j\circ h) (F)}$.\\
		\item [iii)] ${\pi(B_1(\mathbb{C}))}= \overline{(j\circ h) (F)}$, where $F$ is an irreducible component of the fiber of the Prym map. \\
	\end{enumerate}
\end{teo}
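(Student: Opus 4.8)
The plan is to exploit the group-theoretic structure of the families (1e)--(6e) of Galois covers of elliptic curves, for which one has a distinguished generator of the deck group inducing an action on $H^1(C,\mathbb C)$ with a controlled eigenspace decomposition. Concretely, for each such family I would first write down, exactly as in the cyclic-case analysis of Section 3, the matrix $A_0$ representing $\rho(\zeta)$ on $H^0(C,K_C)$ in a unitary eigenbasis (using the Chevalley--Weil formula, Lemma \ref{Moonen}, or its non-cyclic generalization). The key structural point is that, because $g'=1$, the $G$-invariant part $H^0(C,K_C)^G$ is one-dimensional (it is $f^*H^0(C',K_{C'})$), so the eigenvalue $1$ (or the trivial character) occurs in $A_0$ with multiplicity exactly one; the corresponding $1$-dimensional invariant piece of $H^1$ is $f^*H^1(C',\mathbb C)$ and gives a distinguished splitting.

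Next I would compute $\p'=\Z_\p(\Ga)$ and $\k'=[\p',\p']$ following the recipe established after Corollary \ref{decomposizione}: $\p'$ consists of the blocks $\left(\begin{smallmatrix} 0 & \overline D \\ D & 0 \end{smallmatrix}\right)$ with $D=D^t$ and $DA_0=\overline{A_0}D$, and one reads off that $D$ decomposes into a $1\times 1$ block (coming from the trivial isotypic piece, i.e.\ from $JC'$) plus a complementary block. For part (i), the claim is that the $1\times 1$ block together with its bracket contributes an $\so(2)$-summand, hence a factor $B_1(\mathbb C)$, and that the Cartan decomposition splits as a direct sum of ideals $\g'=(\k_1'\oplus\p_1')\oplus(\k_2'\oplus\p_2')$ with $\dim_{\mathbb C}\p_1'=1$; since $\sieg^\Ga$ is of the noncompact type (Lemma \ref{noeuclidean} plus Corollary \ref{decomposizione}), this de Rham splitting of the symmetric pair gives the product decomposition $\sieg^\Ga\cong B_1(\mathbb C)\times M$ with $M$ of codimension $1$. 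I would verify the bracket relations case by case, but the uniform reason is that the trivial isotypic component is orthogonal to the rest and $[\p_1',\p_2']=0$.

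For parts (ii) and (iii) I would identify the two factors with the two fibrations. The map $\phi$ sends $[C_t\to C_t']$ to $[JC_t']\in\A_1$; at the level of the Siegel space, moving $[C_t']$ in $\A_1$ corresponds to deforming the complex structure on the invariant piece $f^*H^1(C',\mathbb C)$, i.e.\ it is the $B_1(\mathbb C)$ factor, while fixing $[JC_t']$ and letting $C_t$ vary corresponds to deforming the complement, i.e.\ the $M$ factor. Thus $M=\sieg^{\Ga'}$ where $\Ga'$ is the stabilizer of the chosen point in the $B_1(\mathbb C)$-direction, its image $\pi(M)$ is a totally geodesic subvariety of $\A_g$, and by \cite[Corollary 4.1]{fgs} the fibers of $\phi$ are totally geodesic of codimension $1$; comparing dimensions and noting that $\overline{(j\circ h)(F)}\subset\pi(M)$ with both totally geodesic of the same dimension forces equality on an irreducible component $F$. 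The argument for the Prym map is symmetric: $Prym(C,C')$ records the complex structure on the anti-invariant part, so fixing it pins down $M$ and lets only $B_1(\mathbb C)$ vary, and since the Prym fibers are totally geodesic curves (loc.\ cit.) they must coincide with $\pi(B_1(\mathbb C))$.

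The main obstacle I expect is the precise matching in (ii) and (iii): one must check that the tangent space to the fiber $(j\circ h)(F)$ inside $T\sieg$ really equals the $\p'$-summand coming from the corresponding isotypic block, which requires knowing that the Torelli/period map is compatible with the decomposition $JC\sim JC'\times Prym(C,C')$ and that $h$ is generically finite so no dimension is lost — together with ruling out that $\pi(M)$ or $\pi(B_1(\mathbb C))$ could be strictly larger than a single irreducible fiber component. This is handled by the dimension count (the fibers of $\phi$ have codimension $1$ in $B$, matching $\dim M=\dim\sieg^\Ga-1$, and the Prym fibers are curves, matching $\dim B_1(\mathbb C)=1$) combined with the fact that a totally geodesic subvariety containing and contained in totally geodesic subvarieties of equal dimension must agree with an irreducible component.
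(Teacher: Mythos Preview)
Your plan for (i) is exactly the paper's: from the isotypic decomposition one gets $(S^2H^0(K_C))^G=S^2(H^0(K_C)^G)\oplus(S^2V_-)^G$, and since $g'=1$ the first summand is one-dimensional, so every $D\in\p'$ is a $1\times1$ block plus a complementary block; because $\k'=[\p',\p']$ inherits the same block shape, both pieces are $\ad_{\k'}$-invariant and $\sieg^\Ga\cong B_1(\mathbb C)\times M$.

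For (ii) and (iii) the overall idea is the same but the paper's execution differs in two respects worth knowing. First, the paper establishes the inclusion in the \emph{opposite} direction from yours: it identifies the codifferential of $\phi$ at a generic $t$ with the restriction map $i^*:(S^2H^0(K_C)^*)^G\to S^2(H^0(K_C)^G)^*$ (using that the codifferential of the Torelli map is multiplication $m$), so $W_2\subset\ker(d\phi\circ d(j\circ h)^{-1})$ and hence $\pi(M)\subset\overline{(j\circ h)(F)}$; the Prym case is parallel. This explicit codifferential computation is precisely the rigorous form of the ``compatibility of the period map with the isogeny decomposition'' that you correctly flag as the main obstacle. Second, to upgrade the inclusion to equality the paper proves and invokes a separate lemma (Baire category plus the Identity Lemma for analytic sets) stating that $\pi(M_1)\subset\pi(M_2)$ for closed connected complex submanifolds of $\sieg$ forces $a\cdot M_1\subset M_2$ for some $a\in\Sp(2g,\mathbb Z)$, then concludes by dimension. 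Your irreducibility-plus-equal-dimension argument in $\A_g$ is also valid once one inclusion is in hand, so this lemma is a choice rather than a necessity; but note that your direction $\overline{(j\circ h)(F)}\subset\pi(M)$ requires either knowing $\pi(M)$ is closed or, better, arguing that a lift of $\overline{(j\circ h)(F)}$ and $M$ share an open set and hence coincide as totally geodesic submanifolds of the same dimension through a common point.
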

\begin{remark}
		It is known that the fibers of the Prym map are not irreducible for the family (1e), and irreducible for the family (2e) (see \cite{fns} and references therein). 
\end{remark}
Consider as usual the action of $G$ on the space of holomorphic one$-$forms and let $H^0(C, K_C)=\oplus_{\chi}V_{\chi}$ be the associated decomposition in isotypic components. If $\chi_0$ is the character of the trivial representation, then $V_{\chi_0}=H^0(C,K_C)^G$. Denote by $V_-$ the direct sum of the other isotypic components. We get $H^0(C, K_C)=H^0(C, K_C)^G\oplus V_-$. We point out that $H^0(C, K_C)^G$ and $ V_-$ are the subspaces of $H^0(C, K_C)$ corresponding, respectively, to the construction of $JC'$ and of $Prym(C,C')$. In fact, since the pullback map $f^*:H^0(C', K_{C'})\rightarrow H^0(C, K_C)$ is injective,  $H^0(C', K_{C'})\simeq f^*(H^0(C', K_{C'}))=H^0(C, K_C)^G$. Note that this implies $\dim H^0(C, K_C)^G=1$. Moreover recall that, for a certain sublattice $\Lambda\subset H_1(C, \mathbb{Z})$, we have $Prym(C, C')=(V_-)^*/\Lambda$. \\

For $J\in\sieg^\Ga$ denote as usual by $\g=\mathfrak{sp}(2g, \erre)$, and $\g=\k\oplus \p$ the Cartan decomposition associated to $\sieg$. The proof of Theorem \ref{teo2} is based on the following two Propositions.

\begin{prop} Let $\g'=\p'\oplus \k'$ be the Cartan decomposition of the uniformizing symmetric space $\sieg^\Ga$ of one of the families (1e), (2e) (3e), (4e), (6e). Then 
$$\p'=W_1\oplus W_2, \quad W_1=S^2(H^0(K_C)^G)^*,\quad W_2\subset S^2(V_-)^*$$
where $W_i$ is $\ad_{\k'}-$invariant. In other words, $\sieg^\Ga=B_1(\mathbb{C})\times M$, where $M$ is a symmetric space with $T_JM=W_2$ and $T_JB_1(\C)=W_1$.
\end{prop}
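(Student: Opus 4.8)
The plan is to mimic the structure already used for the cyclic and non-cyclic line-cover examples, but now exploiting the tangent-space description $T_J\sieg^\Ga=\Z_\p(\Ga)$ together with the Cartan-decomposition description of Corollary~\ref{decomposizione}, namely $\p'=\Z_\p(\Ga)$ and $\k'=[\p',\p']$. First I would recall the general shape of $\p$ from \eqref{dec}: an element of $\p$ is $\left(\begin{smallmatrix} 0 & \overline D\\ D & 0\end{smallmatrix}\right)$ with $D=D^t$, where $D$ acts on $H^0(C,K_C)$ written in a unitary basis adapted to the isotypic decomposition $H^0(C,K_C)=H^0(C,K_C)^G\oplus V_-$. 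Since $\dim H^0(C,K_C)^G=1$, this basis has the form $\{\omega_1,\omega_2,\dots,\omega_g\}$ with $\omega_1$ spanning $H^0(C,K_C)^G$ and $\omega_2,\dots,\omega_g$ a basis of $V_-$ on which $\Ga$ acts with no trivial subrepresentation. On cohomology $\Ga$ acts by block matrices $\left(\begin{smallmatrix}A_0 & 0\\ 0 & \overline{A_0}\end{smallmatrix}\right)$ as in \eqref{A}, and $\Z_\p(\Ga)$ consists of those $D$ that commute with $A_0$ in the appropriate twisted sense, i.e. that preserve the isotypic decomposition.

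The key algebraic point is a Schur-lemma argument: because the action on $H^0(C,K_C)$ decomposes as the trivial character on the line $H^0(C,K_C)^G$ plus a representation on $V_-$ containing no trivial component, any symmetric bilinear form $D$ on $H^0(C,K_C)$ that is $\Ga$-equivariant (in the sense required to lie in $\Z_\p(\Ga)$) must be block-diagonal with respect to the splitting $H^0(C,K_C)^G\oplus V_-$: the off-diagonal block would be a $\Ga$-equivariant map between $H^0(C,K_C)^G$ and $V_-$, hence zero. This gives $\p'=W_1\oplus W_2$ with $W_1$ the symmetric forms supported on the line, i.e. $W_1\cong S^2(H^0(K_C)^G)^*$, and $W_2\subset S^2(V_-)^*$ the $\Ga$-invariant symmetric forms on $V_-$; note $\dim W_1=1$. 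Then I would check $\ad_{\k'}$-invariance of each $W_i$: since $\k'=[\p',\p']$ and, from the block computations already carried out in the paper, the bracket $[X,Y]$ of two elements of $\p$ is block-diagonal $\left(\begin{smallmatrix}\overline D E-\overline E D & 0\\ 0 & D\overline E-E\overline D\end{smallmatrix}\right)$, an element of $\k'$ respects the $H^0(K_C)^G\oplus V_-$ splitting; bracketing such a block-diagonal $\k$-element with a block-diagonal $\p$-element again preserves the splitting, so $W_1$ and $W_2$ are each $\ad_{\k'}$-stable. Equivalently, $\ad_{\k'}$ acts on $W_1$ by zero (there is no room: the corresponding $\u(1)$-block acts trivially on a $1\times 1$ symmetric form, as one sees from $\ad_C(D)=\overline C D-DC$ with $C,D$ scalars), so $W_1$ splits off as a direct factor.

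From the splitting $\p'=W_1\oplus W_2$ into $\ad_{\k'}$-invariant pieces, and the fact (remarked after Corollary~\ref{decomposizione}, via Lemma~\ref{noeuclidean}) that $\sieg^\Ga$ is of the noncompact type with no euclidean factor, the de~Rham--type decomposition of the symmetric space follows: $\sieg^\Ga\cong M_1\times M$ where $T_JM_1=W_1$ and $T_JM=W_2$, and $\k'=[\p',\p']$ splits accordingly as $[W_1,W_1]\oplus[W_2,W_2]$ with $[W_1,W_2]=0$. Since $\dim_\C M_1=\dim W_1=1$ and the only $1$-dimensional irreducible Hermitian symmetric space of the noncompact type is the Poincaré disc $B_1(\C)$, we get $\sieg^\Ga\cong B_1(\C)\times M$ with $\operatorname{codim}M=1$, as claimed.

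The main obstacle I anticipate is making the Schur-type argument fully rigorous at the level of the twisted commutation condition defining $\Z_\p(\Ga)$: one must be careful that the relevant equivariance for a \emph{symmetric form} $D$ (which transforms by $D\mapsto \bar g^{\,t} D\bar g$ or similar, rather than by conjugation) still forces the off-diagonal block to vanish. This should come down to the observation that $V_-$ and its conjugate $\overline{V_-}$ share no trivial $\Ga$-subrepresentation — true because $\Ga$ is finite, so $V_-$ is a sum of nontrivial irreducibles and so is $\overline{V_-}$ — combined with the explicit form of the $\Ga$-action on $H^1$ in \eqref{A}. Once that vanishing is established, identifying $W_1$ with $S^2(H^0(K_C)^G)^*$ and checking $W_2\subset S^2(V_-)^*$ is bookkeeping, and the passage from the $\ad_{\k'}$-invariant splitting of $\p'$ to the product decomposition of the symmetric space is standard structure theory for symmetric spaces of the noncompact type.
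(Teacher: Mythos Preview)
Your proposal is correct and follows essentially the same route as the paper. The paper phrases the Schur step you worry about more directly via the identification $\p\simeq S^2H^0(K_C)^*$: from $H^0(K_C)=H^0(K_C)^G\oplus V_-$ one has $S^2H^0(K_C)=S^2H^0(K_C)^G\oplus\bigl(H^0(K_C)^G\otimes V_-\bigr)\oplus S^2V_-$, and taking $G$-invariants kills the cross term since $V_-$ has no trivial summand, giving $\p'=(S^2H^0(K_C)^*)^G=W_1\oplus W_2$ without needing to unpack the twisted commutation condition at the matrix level; the $\ad_{\k'}$-invariance is then deduced exactly as you do, from the block-diagonal shape of $D$ forcing $\k'=[\p',\p']$ to be block-diagonal as well.
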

\begin{proof}
From the decomposition $H^0(K_C)=H^0(K_C)^G\oplus V_-$, we get $$(S^2H^0(K_C))^G=S^2H^0(K_C)^G\oplus (S^2V_-)^G.$$ 
Recalling that $\p\simeq S^2H^0(K_C)^*$, we conclude \begin{equation}\label{Ginv}
\p'=W_1\oplus W_2, \quad W_1=S^2(H^0(K_C)^G)^*,\  W_2= (S^2(V_-)^*)^G\subset S^2(V_-)^*.
\end{equation}
We want now to show that the subspaces $W_i$ are $ad_{\k'}-$invariant and hence that we get a decomposition of $\sieg^\Ga$ as symmetric space. 
Choose a norm one vector that spans $H^0(K_C)^G$ a unitary basis of $V_-$, and consider the induced basis of $H^1(C, \C)$.
With the choice of this basis, \eqref{Ginv} implies that, in terms of the usual matrix notation \ref{dec}, an element $D\in \p'=\Z_\p(\Ga)$ is a block matrix of the form $$D=\left(\begin{matrix}
d_{11} & 0 \\ 0 & F
\end{matrix}\right).$$
Since $\k'=[\p',\p']$, the elements in $\k'$ also inherit this property. Hence $$W_1=\{\left(\begin{matrix}
d_{11} & 0 \\ 0 & 0
\end{matrix}\right),\  d_{11}\in \mathbb{C}\}\quad \text{and}\quad W_2=\{\left(\begin{matrix}
0 & 0 \\ 0 & F
\end{matrix}\right)\in \p'\}$$ are $\ad_{\k'}-$invariant subspaces of $\p'$. That is, the adjoint representation is not irreducible and $\sieg^\Ga\simeq B_1(\mathbb{C})\times M$, where $M$ is a symmetric space with tangent space $W_2$, and $T_JB_1(\C)=W_1$.
\end{proof}

\begin{lemma}\label{totallygeo}
	Let $M_1, M_2\subset \sieg$ be closed connected complex submanifolds such that $\pi(M_1)\subset \pi(M_2)$. Then there exists $a\in Sp(2g, \mathbb{Z})$ such that $a.(M_1)\subset M_2$.
\end{lemma}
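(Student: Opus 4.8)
\textbf{Proof plan for Lemma \ref{totallygeo}.}

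The plan is to exploit the fact that $\pi:\sieg\to\A_g$ is the quotient by the properly discontinuous action of $\Sp(2g,\Zeta)$, together with the hypothesis that both $M_1$ and $M_2$ are \emph{connected} complex submanifolds. First I would pick a point $p\in M_1$ and, using $\pi(M_1)\subset\pi(M_2)$, choose $q\in M_2$ with $\pi(p)=\pi(q)$; by definition of the quotient there is $a\in\Sp(2g,\Zeta)$ with $a.p=q$. The claim is that this single element $a$ works globally, i.e. $a.M_1\subset M_2$. To see this, consider the set $S=\{x\in M_1:\ a.x\in M_2\}$. It is nonempty since $p\in S$, and it is closed in $M_1$ because $M_2$ is closed in $\sieg$ and $x\mapsto a.x$ is continuous. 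The heart of the argument is to show $S$ is also open in $M_1$, so that connectedness of $M_1$ forces $S=M_1$.

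For openness, fix $x\in S$, so $y:=a.x\in M_2$. Because $\pi(M_1)\subset\pi(M_2)$, for every $x'$ in a neighborhood of $x$ in $M_1$ there is some $b=b(x')\in\Sp(2g,\Zeta)$ with $b.x'\in M_2$. The point is to upgrade the a priori varying $b(x')$ to the fixed element $a$ on a whole neighborhood. Here I would use that the $\Sp(2g,\Zeta)$-action is properly discontinuous: there are neighborhoods $V\ni x$ in $\sieg$ and $W\ni y$ in $\sieg$ such that $c.V\cap W\neq\vacuo$ for only finitely many $c\in\Sp(2g,\Zeta)$, and after shrinking we may assume $a$ is the \emph{only} such $c$ with $c.x\in W$ (possible since the stabilizer considerations and discreteness let us separate the finitely many offending group elements; more simply, shrink $V,W$ so that $a.V\subset W$ and no other translate of $x$ lies in $W$). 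Now for $x'\in M_1\cap V$ close to $x$, continuity of the action gives $a.x'\in W$, while $\pi(a.x')=\pi(x')\in\pi(M_2)$ provides $m\in M_2$ with $\pi(m)=\pi(a.x')$; since $m\in\sieg$ lies over the same point as $a.x'\in W$ and (shrinking $W$ further so that $M_2\cap W$ is the single sheet through $y$, using that $M_2$ is a submanifold) the only preimage of $\pi(a.x')$ near $y$ lying in $M_2$ is $a.x'$ itself, we conclude $a.x'=m\in M_2$. Hence $M_1\cap V\subset S$, proving $S$ is open.

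The main obstacle is precisely this openness step: one must rule out that the "correcting" element $b(x')$ jumps away from $a$ as $x'$ moves, and the tool for that is the properly discontinuous (indeed free, away from elliptic points, but proper discontinuity alone suffices) nature of the $\Sp(2g,\Zeta)$-action combined with the local-homeomorphism property of $\pi$ off the branch locus; near branch points one argues with the local model $\sieg/\Fix$ and notes that the finite stabilizer can be absorbed into $a$. Once $S=M_1$ is established, we have $a.M_1\subset M_2$ with $a\in\Sp(2g,\Zeta)$, which is the assertion. I would also remark that this is exactly the statement needed to pass from the equality of images $\pi(M)=\overline{(j\circ h)(F)}$ at the level of $\A_g$ back to a containment of actual totally geodesic submanifolds of $\sieg$ up to the $\Sp(2g,\Zeta)$-action, which is how the Lemma will be used in the proof of Theorem \ref{teo2}.
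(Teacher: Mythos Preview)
Your openness argument has a genuine gap. You fix $a$ from a single point $p$ and then try to show $S=\{x\in M_1:\ a.x\in M_2\}$ is open. At $x\in S$ you produce $m\in M_2$ with $\pi(m)=\pi(a.x')$, and then assert that ``the only preimage of $\pi(a.x')$ near $y$ lying in $M_2$ is $a.x'$ itself''. But this presupposes $a.x'\in M_2$, which is exactly what you are trying to prove; and nothing forces $m$ to lie in your small neighbourhood $W$: the point $m$ may be a faraway point of $M_2$ over the same $\pi$--fibre. Proper discontinuity only controls how many translates of $a.x'$ lie \emph{in} $W$, it says nothing about where the $M_2$--preimages of $\pi(a.x')$ sit globally. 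Concretely, if $M_1=c.M_2$ for some $c\in\Sp(2g,\Zeta)$ with $c.M_2\cap M_2$ a proper nonempty analytic subset (a single point would do), then choosing $p$ in that intersection and $a=\id$ satisfies $a.p\in M_2$, yet $a.M_1=c.M_2\not\subset M_2$. So the particular $a$ you pick need not work, and the clopen--plus--connectedness strategy cannot succeed without further input.

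The paper's proof sidesteps this by not committing to an $a$ in advance. One writes $M_1=\bigcup_{a\in\Sp(2g,\Zeta)} M_1\cap f_a^{-1}(M_2)$, a countable union of closed (indeed complex analytic) subsets of the connected complex manifold $M_1$. By Baire, some $M_1\cap f_a^{-1}(M_2)$ has nonempty interior in $M_1$; then the Identity Lemma for analytic sets forces $M_1\cap f_a^{-1}(M_2)=M_1$, i.e.\ $a.M_1\subset M_2$. The key difference is that Baire \emph{selects} an $a$ for which the intersection is already fat, whereas your argument tries to prove fatness for an arbitrary $a$ hitting $M_2$ once, which is false in general.
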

\begin{proof}
For $a\in Sp(2g, \mathbb{Z})$, let $f_a: \sieg \rightarrow \sieg$, $f_a(J)=a.J=aJa^{-1}$. By assumption $$M_1\subset \bigcup_{a\in Sp(2g, \mathbb{Z})} f_a^{-1}(M_2).$$
Indeed, for $J\in M_1$ there exists $b,c\in Sp(2g, \mathbb{Z})$ and $J_2\in M_2$, such that $bJ_1b^{-1}=cJ_2c^{-1}.$ Hence there exists $a\in Sp(2g, \mathbb{Z})$ such that $f_a(J_1)=J_2$. Thus 
$$M_1= \bigcup_{a\in Sp(2g, \mathbb{Z})}M_1\cap f_a^{-1}(M_2).$$
Since $M_1\cap f_a^{-1}(M_2)$ is closed in $M_1$ and $a$ vary in a countable set, Baire theorem \cite[p. 57]{bredon} implies that there exists $a$ and an open subset $U\subset M_1$, such that $U\subset f_a^{-1}(M_2)$, i.e. $f_a(U)\subset M_2$. Therefore $f_a^{-1}(M_2)\cap M_1$ is an analytic subset of $M_1$, which contains an open subset $U\subset M_1$. By the Identity Lemma \cite[p. 167]{grauert-remmert-cas}, this implies that $f_a^{-1}(M_2)\cap M_1=M_1$ and hence that $f_a(M_1)\subset M_2$.
\end{proof}

\begin{prop}\label{blabla}
Let $\sieg^\Ga=B_1(\mathbb{C})\times M$ be the uniformizing symmetric space of one of the families (1e), (2e) (3e), (4e), (6e). Then 
\begin{enumerate}
	\item [i)] ${\pi(M)}= \overline{(j\circ h) (F)}$, where $F$ is an irreducible component of the fiber of $\phi$.\\
	\item [ii)] ${\pi(B_1(\mathbb{C}))}= \overline{(j\circ h) (F)}$, where $F$ is an irreducible component of the fiber of the Prym map.
\end{enumerate}
\end{prop}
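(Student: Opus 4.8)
The plan is to match the two natural fibrations on the family $B$ with the product decomposition $\sieg^\Ga = B_1(\C)\times M$ coming from the previous proposition, using Lemma~\ref{totallygeo} to pass from an inclusion of images in $\A_g$ to an honest inclusion of submanifolds of $\sieg$. Recall that $H^0(C,K_C)^G$ corresponds to the construction of $JC'$ and $V_-$ to the construction of $Prym(C,C')$; this is the geometric input that tells us which tensor factor ``sees'' which map. So the first step is to reinterpret the fibers of $\phi$ and $P$ in terms of period matrices: if $J \in \sieg^\Ga$ lies over $[JC_t]$, then moving along the fiber of $\phi$ (i.e. keeping $[JC'_t]$ fixed) should keep the $W_1 = S^2(H^0(K_C)^G)^*$ block of the period matrix fixed and only vary the $W_2 \subset S^2(V_-)^*$ block, and symmetrically for $P$.

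The key steps, in order, are as follows. First I would fix a base point $t_0 \in B$ and a lift $J_0 \in \sieg^\Ga$ of $[JC_{t_0}]$, giving the splitting $T_{J_0}\sieg^\Ga = W_1 \oplus W_2$ with $W_1 = T_{J_0}B_1(\C)$, $W_2 = T_{J_0}M$. Second, I would compute the differential of $\phi \circ h^{-1}$ (or rather, work on $B$ and use that $h$ is generically finite, so it suffices to understand things on $\M_g$ via the Torelli map). The derivative of the Torelli map at $[C]$ is the map $S^2 H^0(K_C) \to H^0(2K_C)^\vee$-dual picture; the point is that the ``$JC'$ direction'' in $\Ag$ is exactly the image of $W_1$ under $d\pi$, since $H^0(K_{C'}) \cong H^0(K_C)^G$, while the $M$-direction lies in the Prym part. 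Hence $\pi(M)$ and $\overline{(j\circ h)(F_\phi)}$ (with $F_\phi$ an irreducible component of a $\phi$-fiber through $t_0$) are two closed connected complex submanifolds of $\A_g$ with the same tangent space at the image of $J_0$, and both are totally geodesic: $\pi(M)$ because $M \subset \sieg$ is totally geodesic (being a factor of the totally geodesic $\sieg^\Ga$), and $\overline{(j\circ h)(F_\phi)}$ because it is a Shimura-type fiber known from \cite{fgs} to be totally geodesic. Third, I would invoke that a totally geodesic submanifold of a locally symmetric space is determined in a neighborhood of a point by its tangent space there, together with Lemma~\ref{totallygeo}, to upgrade the equality of germs to $\pi(M) = \overline{(j\circ h)(F_\phi)}$ (and the symmetric argument for $B_1(\C)$ and the Prym map). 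The ``in particular'' statements about uniformization are then immediate from the definition.

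I expect the main obstacle to be the precise bookkeeping of differentials: making rigorous the claim that the $W_1$-block of the period matrix is a local coordinate for the image of $\phi$ and the $W_2$-block for the image of $P$. Concretely, one must check that the composite $B \xrightarrow{h} \M_g \xrightarrow{j} \A_g$, followed by projection to the relevant sub-locus, has differential whose kernel is exactly $dh^{-1}(W_2)$ (resp. $dh^{-1}(W_1)$) — i.e. that the fibration directions are genuinely transverse to the factors and not merely tangent. For this I would use the functoriality of the Torelli map under the isogeny decomposition $JC \sim JC' \times Prym(C,C')$: the period matrix of $JC$ is block-diagonal (up to isogeny) with blocks the period matrices of $JC'$ and of $Prym(C,C')$, and varying $t$ in a $\phi$-fiber fixes the first block by definition of $\phi$; since the first block is glued to $W_1$ and the second to $W_2$ via \eqref{Ginv}, the claim follows. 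A secondary, more technical point is the passage from ``component of a fiber'' to the closure $\overline{(j\circ h)(F)}$ and checking that Lemma~\ref{totallygeo} applies to it — one needs that this closure is a closed connected complex submanifold, which follows because it is (a component of) a Shimura subvariety by \cite{fgs}. Once these are in place, the argument is a clean comparison of totally geodesic submanifolds via their tangent spaces plus the Baire-category lemma already proved.
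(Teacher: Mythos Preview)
Your approach is essentially the paper's: compute the differential of $\phi$ (resp.\ $P$) to see that the $M$-direction (resp.\ $B_1(\C)$-direction) is tangent to the $\phi$-fiber (resp.\ Prym-fiber), then invoke the total geodesicity from \cite{fgs} together with Lemma~\ref{totallygeo} and a dimension count to turn this into an equality. The paper makes the differential step concrete by identifying the codifferential of $\phi$ at $t$ with the composite $S^2(H^0(K_C)^G)\xrightarrow{m\circ i} H^0(2K_C)^G\xrightarrow{dh^*}(T_tB)^*$ (and similarly for $P$), so that $d\phi\circ d(j\circ h)^{-1}$ is just the restriction $(S^2H^0(K_C)^*)^G\to S^2(H^0(K_C)^G)^*$, whose kernel visibly contains $W_2\subset S^2(V_-)^*$; this replaces your period-matrix/block heuristic with a one-line Hodge-theoretic identity and is exactly the ``bookkeeping of differentials'' you flagged as the main obstacle.
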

\begin{proof} \emph{i)} For $t\in B$ generic, $dh_t(T_tB)=H^0(C_t, T_{C_t})^G$, and the codifferential of $\phi$ at $t$ is given by the composition
$$S^2(H^0(K_C)^G)\overset{m \circ i}{\longrightarrow} H^0(2K_C)^G\overset{dh^*}{\longrightarrow} (T_tB)^*, $$
where $i: S^2H^0(K_C)^G\hookrightarrow S^2H^0(K_C)$ is the inclusion and $m$ is the multiplication map. Since $m^*=dj$, we get $d\phi=i^*\circ dj \circ dh$ and we conclude that $$d\phi\circ d(j\circ h)^{-1}=i^*: (S^2H^0(K_C)^*)^G\rightarrow S^2(H^0(K_C)^G)^*, \quad \lambda\mapsto \lambda|_{S^2H^0(K_C)}.$$ Since $W_2\subset S^2(V_-)^*$, it follows that $W_2\subset \ker(d\phi \circ d(j\circ h)^{-1})=d(j\circ h)(\ker d\phi)=d(j\circ h)(T_tF).$ So ${\pi(M)}\subset\overline{(j\circ h)(F)}$. By \cite[Theorem 3.11]{fgs}, $\overline{(j\circ h)(F)}\subset \A_g$ is totally geodesic of dimension $\dim\sieg^\Ga-1$. Denote by $M'$ its uniformizing symmetric space. Applying Lemma \ref{totallygeo} to $M$ and $M'$, we obtain that $a.M\subset M'$ for some $a\in Sp(2g, \mathbb{Z})$. Since they have the same dimension, the equality sign holds and we get $\pi(M)=\pi(a.M)=\pi(M')=\overline{(j\circ h)(F)}$.\\ \\
\emph{ii)} For $t\in B$ generic the codifferential of the Prym mat at $t$ is given by the composition
$$S^2(V_-)\overset{pr}{\longrightarrow} (S^2(V_-))^G \overset{m\circ i}{\longrightarrow}H^0(2K_{C_t})^G\overset{dh^*}{\longrightarrow}(T_tB)^*$$
where $pr$ denotes the natural projection and now $i: (S^2V_-)^G\hookrightarrow S^2H^0(K_C)$. Thus $dP=pr^*\circ i^* \circ dj \circ dh$ and we conclude that 
$$dP\circ d(j\circ h)^{-1}=pr^* \circ i^*: (S^2H^0(K_C)^*)^G\rightarrow S^2(V_-)^*.$$
Since $i^*:S^2H^0(K_C)^* \rightarrow {(S^2V_-)^G}^*$ is the restriction and $W_1=S^2H^0(K_C)^G$, we get $W_1\subset \ker(dP \circ d(j\circ h)^{-1})=d(j\circ h)(\ker dP)=d(j\circ h)(T_tF).$ So ${\pi(B_1(\C))}\subset\overline{(j\circ h)(F)}$. 
By \cite[Theorem 3.9]{fgs}, $\overline{(j\circ h)(F)}\subset \A_g$ is totally geodesic of dimension $1$. Applying again Lemma \ref{totallygeo}, the same argument as above implies ${\pi(B_1(\C))}=\overline{(j\circ h)(F)}$.
\end{proof}

We present in the following the analysis of the various examples, reporting the isomorphisms with the families of Galois covers of the line. For $G=\mathbb{Z}/m$, we will denote by $\chi_j$ character of the the irreducible representation $\rho_j$ of $G$ defined as $$\rho_j(1)(\omega)=\zeta^j\omega,\quad \forall \omega\in H^0(C, K_C),$$  where $\zeta=e^{2\pi i/m}$.\ \\
 
\paragraph{\textbf{Family (1e)=(26)}}
The Chevalley-Weil formula gives $\chi_{\rho}=\chi_0+ \chi_1 $. 
%
It follows the decomposition of $H^0(C, K_C)$ as $H^0(C, K_C)=H^0(C, K_C)^G\oplus V_-$, with $H^0(C, K_C)^G=V_0$ and $V_-=V_1$, both of dimension $1$. Thus the action $G$ on $H^0(C, K_C)$ is given by  $$\rho: \mathbb{Z}/2\mathbb{Z} \rightarrow GL(V_0\oplus V_1), \quad \rho(g)=\left( \begin{matrix}
\rho_0(g) & 0 \\ 0 & \rho_1(g)
\end{matrix}
\right).$$
Thus, $A_0=\diag(1,-1)$.\\
$\p'= \Z_{\p}(A)=\{\left( \begin{matrix}
0 & \overline{D} \\ D & 0
\end{matrix}
\right),\ D=\diag(d_1,d_2),\ d_i\in \mathbb{C}\}$\\
$\k'=[\p', \p']=\Z_{\g}(A)\cap \k=\{\left( \begin{matrix}
C & 0 \\ 0 & \overline{C}
\end{matrix}
\right), \ C=\diag(i\lambda_1, i\lambda_2),\ \lambda_i\in\erre\}.$\\
$W_1=span(\diag(1, 0))= S^2(H^0(C, K_C)^G)^*$,\\ $W_2=span(\diag(0, 1))=S^2(V_-)^*$.\\ $\sieg^\Ga$ decomposes in the product of two irreducible hermitian symmetric spaces of complex dimension $1$.\ \\

\paragraph{\textbf{Family (2e)}}
The Chevalley-Weil formula implies $\chi=\chi_0+2\chi_1$. Thus there exists $1-$dimensional subspaces $V_0, V_1, V_1'\subset H^0(C, K_C)$, such that $H^0(C, K_C)=V_0\oplus V_1\oplus V'_1$ and $$\rho: \mathbb{Z}/2\rightarrow GL(V_0\oplus V_1\oplus V'_1), \quad \rho(g)=\left( \begin{matrix}
\rho_0(g) & 0 & 0\\ 0 & \rho_1(g)&0 \\0 & 0 & \rho_1(g)
\end{matrix}
\right).$$
$A_0=\diag(1,-1,-1)$.\\
$\p'= \Z_{\p}(A)=\{\left( \begin{matrix}
0 & \overline{D} \\ D & 0
\end{matrix}
\right),\ D=\left( \begin{matrix}
d & 0 \\ 0 & F
\end{matrix}\right),\ d\in \mathbb{C},\ F=F^t\}$.\\ 
$\k'=\Z_{\g}(A)\cap \k=\{\left(\begin{matrix}
C & 0 \\ 0 & \overline{C}
\end{matrix}\right), \ C=\left( \begin{matrix}
i\lambda & 0\\ 0 & E
\end{matrix}
\right),\ \lambda\in \erre,\ E\in \u(2)\}
.$\\
For $C=\left( \begin{matrix}
i\lambda & 0\\ 0 & E
\end{matrix}
\right)\in \k'$ and $D=\left( \begin{matrix}
d & 0 \\ 0 & F
\end{matrix}\right)\in \p'$, we have $$\ad_C(D)=\overline{C}D-DC=\left(\begin{matrix}
-2\lambda d & 0 \\ 0 & \overline{E}F-FE
\end{matrix}\right).$$
$W_1=span(\diag(1,0,0))=S^2(H^0(C, K_C)^G)^*$,\\
$W_2=\{\left(\begin{matrix}
0 & 0 \\ 0 & F
\end{matrix}\right),\ F=F^t\}=S^2(V_-)^*$. \\
$\sieg^\Ga\simeq B_1(\C)\times M$, where $M$ is a $3-$dimensional irreducible symmetric space with Cartan decomposition $\g''=\k''\oplus \p''$ with $\k''=\u(2)$ and $\p''=\{F\in \mathfrak{gl}(2, \mathbb{C}),\ F=F^t\}$. We conclude that $M$ is of type BD I (3,2) and $\sieg^\Ga\simeq$ A III (1,1)$ \times $ BD I (3,2).\ \\

\paragraph{\textbf{Family (3e)=(31)}}
The Chevalley-Weil formula gives $\chi={\chi_0}+{\chi_1}+{\chi_2}$. 
%
Thus there exists $V_i\subset H^0(C, K_C)$, $i=0,1,2$, $\dim V_i=1$, such that $H^0(C, K_C)=V_0\oplus V_1\oplus V_2$ and $$\rho: \mathbb{Z}/3\mathbb{Z}\rightarrow GL(V_0\oplus V_1\oplus V_2), \quad \rho(1)=\left( \begin{matrix}
\rho_0(1) & 0 & 0\\ 0 & \rho_1(1) & 0 \\ 0 & 0 & \rho_2(1)
\end{matrix}
\right).$$
To get the decomposition $H^0(C, K_C)=H^0(C, K_C)^G\oplus V_-$, recall that  $V_0=H^0(C,K_C)^G$, and $V_-=V_1\oplus V_2$. Denoted by $\zeta=e^{2\pi i /3}$, we have $A_0=\diag(1, \zeta, \zeta^2)$.\\
$\p'= \Z_{\p}(A)=\{\left( \begin{matrix}
0 & \overline{D} \\ D & 0
\end{matrix}
\right),\ D=\left( \begin{matrix}
d_1 & 0& 0 \\ 0 & 0 & d_2\\ 0 & d_2 & 0
\end{matrix}\right),\ d_i\in\mathbb{C}\}$\\
$\k'=\Z_{\g}(A)\cap \k=\{\left( \begin{matrix}
C & 0 \\ 0 & \overline{C}
\end{matrix}
\right), \ C=\diag(i\lambda_1, i\lambda_2, i\lambda_3),\  \lambda_i\in\erre\},\  \lambda_i\in\erre\}.$\\
In this case $\ad_C(D)=\overline{C}D-DC=\left(\begin{matrix}
-2i\lambda_1d_1 & 0 & 0 \\ 0 & 0 & -2i\lambda_2d_2 \\ 0 & -2i\lambda_2d_2 & 0
\end{matrix}\right)$. \\
$W_1=span(\diag(1, 0, 0))=S^2(H^0(C, K_C)^G)^*$,\\
$W_2=span(\left(\begin{matrix}
0 & 0 & 0 \\ 0 & 0 & 1 \\ 0 & 1 & 0
\end{matrix}\right))\subset S^2(V_-)^*$\\
$\sieg^\Ga$ is product of two irreducible hermitian symmetric spaces of complex dimension $1$. \ \\

\paragraph{\textbf{Family (4e)=(32)}}
The Chevalley-Weil formula gives $\chi_{\rho}=\chi_0+ \chi_1 + \chi_3$. 
Thus there exists $V_i\subset H^0(C, K_C)$, $i=0,1,3$, $\dim V_i=1$, such that $H^0(C, K_C)=V_0\oplus V_1\oplus V_3$ and $$\rho: \mathbb{Z}/4\mathbb{Z}\rightarrow GL(V_0\oplus V_1\oplus V_3), \quad \rho(1)=\left( \begin{matrix}
\rho_0(1) & 0 & 0\\ 0 & \rho_1(1) & 0 \\ 0 & 0 & \rho_3(1)
\end{matrix}
\right).$$
$A_0=\diag(1,i,-i).$\\
$\p'= \Z_{\p}(A)=\{\left( \begin{matrix}
0 & \overline{D} \\ D & 0
\end{matrix}
\right),\ D=\left( \begin{matrix}
d_1 & 0& 0 \\ 0 & 0 & d_2\\ 0 & d_2 & 0
\end{matrix}\right),\ d_i\in\mathbb{C}\}$ \\
$\k'=[\p',\p']=\{\left( \begin{matrix}
C & 0 \\ 0 & \overline{C}
\end{matrix}
\right), \ C=\diag(i\lambda_1, i\lambda_2, i\lambda_3),\  \lambda_i\in\erre\},\  \lambda_i\in\erre\}.$\\
The adjoint representation is given by $$\ad_C(D)=\overline{C}D-DC=\left(\begin{matrix}
-2i\lambda_1d_1 & 0 & 0 \\ 0 & 0 & -2i\lambda_2d_2 \\ 0 & -2i\lambda_2d_2 & 0
\end{matrix}\right).$$
$W_1=span(\diag(1, 0, 0))=S^2(H^0(C, K_C)^G)^*$,\\ $W_2=span(\left(\begin{matrix}
0 & 0 & 0 \\ 0 & 0 & 1 \\ 0 & 1 & 0
\end{matrix}\right))\subset S^2(V_-)^*.$ \\
$\sieg^\Ga$ is product of two irreducible hermitian symmetric spaces of complex dimension $1$. \ \\

\paragraph{\textbf{Family (6e)}}
This family was originally studied in \cite{pirola-Xiao}. The Chevalley-Weil formula gives $\chi_{\rho}=\chi_0+ \chi_1 + 2\chi_2$.\\
%
$A_0=\diag(1, \zeta^2, \zeta^2, \zeta)$, with $\zeta=e^{2\pi i /3}$.\\
$\p'=\{\left( \begin{matrix}
	0 & \overline{D} \\ D & 0
\end{matrix}
\right),\ D=\left( \begin{matrix}
d_1 & 0 & 0 \\ 0 & 0 & d\\0 & d^t & 0
\end{matrix}
\right),\ d_1\in \mathbb{C},\ d\in M(2,1,\mathbb{C})\}$.\\
$\k'=[\p', \p']=\{ \left( \begin{matrix}
C & 0 \\ 0 & \overline{C}
\end{matrix}
\right), \ C=\left( \begin{matrix}
i\lambda & 0 & 0\\ 0 & F & 0\\ 0 & 0 &  \tr(F)
\end{matrix}
\right),\ F\in \u(2)\}.$\\ 
If $C\in \k'$ and $D\in \p'$, then $\ad_C(D)=\left(\begin{matrix}
-2i\lambda d_1 & 0 & 0 \\ 0 & 0 & \overline{E}d-tr(E)d \\
0 & (\overline{E}d-tr(E)d)^t & 0 
\end{matrix}\right)$.
$\sieg^\Ga\simeq B_1(\C)\times B_2(\C)$ . \ \\

We sum up the results obtained in the following table.
\begin{center}
	\begin{tabular}[H]{cccccccc}
		
			\textbf{Family}	& $g(C)$ & $g(C')$ & $|G|$ & G &  $\textbf{m}$ & $\dim$ & Type\\
		\midrule
		{(1e)} & 2 & 1 & 2 &   $\mathbb{Z}/2$ & $(2^2)$ & 2 &   $B_1(\C)\times B_1(\C)$ \\
		(2e)&  3 & 1&  2 & $\mathbb{Z}/2$ & $(2^4)$& 4& $B_1(\C)\times \sieg_3$\\
		{(3e)} & 3 & 1 & 3 &   $\mathbb{Z}/3$ & $(2^2)$ & 2 & $B_1(\C)\times B_1(\C)$ \\
		{(4e)} & 3 & 1 & 4 &   $\mathbb{Z}/4$ & $(2^2)$ & 2 & $B_1(\C)\times B_1(\C)$ \\
	
	    {(6e)} & 4 & 1 & 3 &   $\mathbb{Z}/3$ & $(3^3)$ & 3 & $B_1(\C)\times B_2(\C)$  \\
		\bottomrule
	\end{tabular}
\end{center}\ \\

\end{document}